\numberwithin{equation}{section}
\theoremstyle{plain}
\newtheorem{theorem}{Theorem}[section]
\newtheorem{lemma}[theorem]{Lemma}
\newtheorem{corollary}[theorem]{Corollary}
\theoremstyle{definition}
\newtheorem{remark}[theorem]{Remark}
\newcommand{\Rmnum}[1]{\expandafter\@slowromancap\romannumeral #1@}
\newcommand{\ud}{\mathrm{d}}
\keywords{Q-curvature, conformally Einstein, Obata-type theorem, Liouville-type theorem}
\subjclass{  35A02, 35J30, 53C18}
\address{Mingxiang Li, Department  of Mathematics \& Institue of Mathematical Sciences, Chinese University of Hong Kong, Shatin, NT, Hong Kong}
\email{mingxiangli@cuhk.edu.hk}
\address{Juncheng Wei,  Department of Mathematics, Chinese University of Hong Kong, Shatin, NT, Hong Kong}
\email{wei@math.cuhk.edu.hk}
\thanks{ This research  is partially supported  by   Hong Kong General Research Fund "New frontiers in singular limits of nonlinear partial differential equations". We thank Professor Jeffrey Case for reading the manuscript and suggestions.}
\begin{document}
	\title[Liouville type theorems]{A Remark On  Case-Gursky-V\'etois identity and its applications}
	\author{ Mingxiang Li,  Juncheng Wei}
	\date{}
	\maketitle
	\begin{abstract}
		Based on  the works of Gursky (CMP, 1997),  V\'etois (Potential Anal., 2023)  and  Case (Crelle's journal, 2024), we  make use of  an Obata type formula established in these works  to obtain  some Liouville type theorems on conformally Einstein manifolds.  In particular, we solve  Hang-Yang conjecture (IMRN, 2020)  via an Obata-type argument and obtain optimal perturbation.
	\end{abstract}
	
	\section{Introduction}

	Given a Riemannian  manifold $(M^n,g)$ with $n\geq 3$, it is well known that
	Branson's Q-curvature \cite{Branson}  is defined by
	\begin{equation}\label{def of Q-curvature}
		Q_g:=-\frac{1}{2(n-1)}\Delta_g R_g-\frac{2}{(n-2)^2}|E_g|^2_g+\frac{n^2-4}{8n(n-1)^2}R_g^2
	\end{equation}
	where $R_g$ is the scalar curvature  and $E_g$ is the trace-free Ricci tensor  defined by   $E_g:=Ric_g-\frac{R_g}{n}g$.  Now, we give some notations and a brief overview of Q-curvature in conformal geometry.  The Schoten tensor is given by
	$$A_g=\frac{1}{n-2}\left(Ric_g-\frac{R_g}{2(n-1)}g\right)$$
	and $\sigma_k(A_g)$ denote the k-th symmetric functions of the eigenvalues of $A_g$. Then we can rewrite Q-curvature in \eqref{def of Q-curvature} as follows
	\begin{equation}\label{Q-curvature def}
		Q_g=-\Delta_g\sigma_1(A_g)+4\sigma_2(A_g)+\frac{n-4}{2}\sigma_1^2(A_g)
	\end{equation}
	The remarkable Paneitz operator  \cite{Paneitz} is given by
	\begin{equation}\label{Paneitz operator}
		P_g\varphi=\Delta^2_g\varphi+div_g\left\{(4A_g-(n-2)\sigma_1(A_g)g)(\nabla \varphi ,\cdot)\right\}+\frac{n-4}{2}Q_g\varphi.
	\end{equation}
	For $n\geq 3$ and $n\not=4$, the Q-curvature $Q_{\tilde g}$ of the  conformal metric $\tilde g=	\varphi^{\frac{4}{n-4}}g$ satisfies
	\begin{equation}\label{conformal Q-curvature n not 4}
		P_g\varphi=\frac{n-4}{2}Q_{\tilde g}\varphi^{\frac{n+4}{n-4}}.
	\end{equation}
	For  $n=4$, the Q-curvature $Q_{\tilde g}$ of the  conformal metric $\tilde g=	e^{2\varphi}g$ satisfies
	\begin{equation}\label{conformal Q-curvature for n=4}
		P_g\varphi +Q_g=Q_{\tilde g}e^{4\varphi}.
	\end{equation}
	
	Similar to the Yamabe problem, it is crucial to identify a conformal metric such that the Q-curvature is constant, which is equivalent to solving the fourth-order nonlinear equations \eqref{conformal Q-curvature n not 4} and \eqref{conformal Q-curvature for n=4} with $Q_{\tilde g}\equiv C$ for some constant $C$. For $ n=4$, significant advancements were made by Chang and Yang \cite{Chang-Yang 95 Ann} and Djadli and Malchiodi \cite{Dj-Ma 08 Ann}. In dimensions $n\geq 5$,  Gursky and Malchiodi \cite{G-M} and Hang and Yang \cite{Hang-Yang CPAM for ngeq 5} established existence results under suitable conditions related to scalar curvature. For $n=3$, a similar result was achieved by Hang and Yang \cite{Hang-Yang}.

	The well known Obata theorem (\cite{Obata 1}, \cite{Obata 2}) states that for an  Einstein manifold $(M^n,g_0)$ with $n\geq3$, if the scalar curvature of conformal metric $g=u^2g_0$ is a constant, $g$ must also  be Einstein. Furthermore,  if    $(M,g_0)$ is not conformally equivalent to round sphere, $u$ must be a positive constant.

	For the Einstein manifold $(M^n,g_0)$ with scalar curvature $R_0$, the Paneitz operator can be succinctly expressed as follows (See \cite{Gover} for more details)
	\begin{equation}\label{Einstein paneitz}
		P_{g_0}\varphi=\Delta_{g_0}^2\varphi-\frac{n^2-2n-4}{2n(n-1)}R_{0}\Delta_{g_0}\varphi+\frac{n-4}{2}Q_0\varphi
	\end{equation}
	where the   Q-curvature $Q_0$ is given by
	\begin{equation}\label{Einstein Q_0}
		Q_0=\frac{n^2-4}{8n(n-1)^2}R_0^2.
	\end{equation}

	It is natural to ask whether similar Obata type theorem holds for Q-curvature. Firstly,  V\'etois \cite{Vetois} made use of Bochner–Lichnerowicz–Weitzenbock formula  and a important Lemma \ref{GM lemma} established by Gursky and Malchiodi   \cite{G-M} to deal with the equations \eqref{conformal Q-curvature n not 4} and \eqref{conformal Q-curvature for n=4}. Then he establishes the following  Obata type theorem.

	\begin{theorem}\label{theorem of Vetois}(V\'etois' theorem in \cite{Vetois})
		Suppose that $(M^n,g_0)$ where $n\geq 3$ is a compact  Einstein manifold with non-negative  scalar curvature $R_0$. Consider a conformal metric
		$g=u^2g_0$ where $u>0$. Suppose that the Q-curvature of conformal metric  $g$ is constant.  Then $g$ is Einstein.  Furthermore,  if    $(M,g_0)$ is not conformally equivalent to round sphere, $u$ must be a positive constant.
	\end{theorem}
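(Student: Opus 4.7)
The plan is to convert the constant-$Q_g$ condition into a fourth-order PDE on the fixed Einstein background $(M^n, g_0)$ and then to extract rigidity from an Obata-type integral identity. When $n \neq 4$, setting $\varphi = u^{(n-4)/2}$ so that $g = \varphi^{4/(n-4)} g_0$, the conformal transformation law \eqref{conformal Q-curvature n not 4} combined with the Einstein expression \eqref{Einstein paneitz} for the Paneitz operator yields
\begin{equation*}
\Delta_{g_0}^2 \varphi - \frac{n^2-2n-4}{2n(n-1)} R_0\, \Delta_{g_0}\varphi + \frac{n-4}{2} Q_0\, \varphi = \frac{n-4}{2} Q_g\, \varphi^{\frac{n+4}{n-4}},
\end{equation*}
with $Q_g$ a real constant; the case $n = 4$ would be treated in parallel by setting $\varphi = \log u$ and using \eqref{conformal Q-curvature for n=4}.

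The core of the argument has two coupled steps. First, I would invoke the Gursky--Malchiodi lemma (Lemma \ref{GM lemma}) to upgrade the hypothesis $R_0 \geq 0$ into a pointwise sign for the scalar curvature $R_g$ of the new metric $g$; without such control the integral identity below need not have a definite sign. Second, following Case--Gursky--V\'etois, I would multiply the PDE by a judicious test function (a suitable power of $\varphi$, or $\log \varphi$ when $n=4$) and integrate by parts on $(M, g_0)$. The Einstein condition forces every curvature commutator produced by integration by parts to collapse into a multiple of $R_0\, g_0$, so that the Bochner--Lichnerowicz--Weitzenbock formula converts the resulting quadratic expression in the Hessian of $\varphi$ into an identity of the schematic shape
\begin{equation*}
0 = \int_M |T|_g^2\, dV_g + \int_M \bigl(\text{terms of definite sign controlled by } R_g\bigr),
\end{equation*}
where $T$ is a traceless symmetric $2$-tensor built from the conformal factor. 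The principal obstacle is precisely this bookkeeping: one must choose the test weight so that all lower-order curvature residues assemble into a single non-negative square, and so that the remaining terms inherit the correct sign from the Gursky--Malchiodi step.

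Once $T$ vanishes identically, a direct conformal computation identifies it with the trace-free Ricci $E_g$ of $g$, so $g$ is Einstein. The classical Obata theorem (\cite{Obata 1}, \cite{Obata 2}) then yields the stated dichotomy: either $u$ is a positive constant, or $(M, g_0)$ is conformally equivalent to the round sphere.
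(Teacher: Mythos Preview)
Your outline is correct for the case $R_0>0$, and it is essentially the paper's own argument (which in turn follows V\'etois and Case): one first checks $Q_g>0$ by integrating the conformal equation against the background volume, then Lemma~\ref{GM lemma} gives $R_g>0$, and the Case--Gursky--V\'etois integral identity (Corollary~\ref{cor: rigity inequality about Q} in the paper, or equivalently the Bochner computation you describe) forces $E_g\equiv 0$; Obata's theorem then gives the dichotomy. Whether you run the identity on $(M,g)$ as the paper does or on the background $(M,g_0)$ as you propose is immaterial.

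There is, however, a genuine gap at $R_0=0$. Lemma~\ref{GM lemma} requires the background metric to have \emph{strictly positive} scalar curvature, so it cannot be invoked when $R_0=0$. Worse, in that case integration of the conformal equation forces the constant $Q_g$ to vanish, and the scalar curvature $R_g$ of $g=u^2g_0$ has no pointwise sign (its volume-weighted integral is zero since $u^{(n-2)/2}$ is $g_0$-harmonic), so the ``terms of definite sign controlled by $R_g$'' in your schematic identity are not definite at all. The paper handles $R_0=0$ by a completely different and elementary route: since $R_0=0$ implies $Q_0=0$ and kills the first-order piece of $P_{g_0}$, the Paneitz operator collapses to $\Delta_{g_0}^2$ and the equation becomes $\Delta_{g_0}^2\varphi=C\varphi^{(n+4)/(n-4)}$ (or $\Delta_{g_0}^2\varphi=Ce^{4\varphi}$ when $n=4$); integrating over $(M,g_0)$ forces $C=0$, whence $\Delta_{g_0}\varphi$ is harmonic, hence constant, hence zero, and $\varphi$ is constant. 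You should insert this case separately.

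One further small point for $R_0>0$: before applying Lemma~\ref{GM lemma} you must verify that the constant $Q_g$ is positive, not merely non-negative; this follows from integrating \eqref{conformal Q-curvature n not 4} or \eqref{conformal Q-curvature for n=4} over $(M,g_0)$ and using $Q_0>0$.
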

	\begin{remark}
		In  original version of V\'etois's theorem(Theorem 1.1 in \cite{Vetois}), he didn't cover the case $R_0=0$. In fact, by integrating \eqref{conformal Q-curvature n not 4} and \eqref{conformal Q-curvature for n=4} using the representation of  \eqref{Einstein paneitz}, it is not hard to include this case.
	\end{remark}

	Recently,   Case \cite{Case} consider a more general Obata-V\'etois type theorem.  For a compact manifold $(M^n,g)$, he introduced  the following mixed curvature
	\begin{equation}\label{I_a curvature }
		I_a(g)=Q_g+a\sigma_2(A_g)
	\end{equation}
	where $Q_g$ and $A_g$ are the Q-curvature and Schouten tensor respectively given before.

		\begin{theorem}\label{theorem case}(Case's theorem in \cite{Case})
		Suppose that $(M^n,g_0)$ is an Einstein manifolds with the scalar curvature $R_0\geq 0$ and $n\geq 3$. Consider the conformal metric
		$g=u^2g_0$ where $u>0$ satisfying $R_g\geq 0$ and $I_a(g)$ is a constant.  If the constant $a$ satisfies
			\begin{equation}\label{a range}
			\frac{(n-2)(n-4)-n\sqrt{n^2+4n}}{2(n-1)}\leq a\leq \frac{(n-2)(n-4)+n\sqrt{n^2+4n}}{2(n-1)},
		\end{equation}
		 then $g$ is Einstein.
	\end{theorem}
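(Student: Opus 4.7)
The plan is to adapt the Obata-type strategy V\'etois used to establish Theorem \ref{theorem of Vetois}, treating the additional term $a\sigma_2(A_g)$ as a perturbation of the quadratic form that ultimately controls the traceless Ricci tensor $E_g$. First, writing $g = u^2 g_0$, one translates $I_a(g) \equiv \text{const}$ into a fourth-order PDE for $u$ on the Einstein background $(M, g_0)$. The conformal transformation laws \eqref{conformal Q-curvature n not 4} and \eqref{conformal Q-curvature for n=4}, together with the simplified Paneitz operator \eqref{Einstein paneitz}, reduce the $Q_g$ contribution to constant-coefficient form, while the $\sigma_2(A_g)$ term produces a fully nonlinear second-order expression in $u$ via the standard conformal change of the Schouten tensor. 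The hypothesis $R_g \ge 0$ is then an inequality on a Yamabe-type quantity in $u$.

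The core step is to derive an integral identity of Case-Gursky-V\'etois type on $(M, g)$. Concretely, one pairs the PDE with a suitable test function (a power of $u$, or $\log u$ when $n=4$) and invokes the Bochner-Lichnerowicz-Weitzenbock formula together with Lemma \ref{GM lemma} to obtain an identity schematically of the form
\[
\int_M \Bigl( \alpha(a,n)\,|E_g|_g^2 + \beta(a,n)\,\langle E_g, T\rangle_g + \gamma(a,n)\, R_g\cdot(\cdots) \Bigr)\, dv_g \; = \; 0,
\]
where $T$ is a traceless symmetric $2$-tensor built from the Hessian of $u$, the coefficients $\alpha,\beta,\gamma$ depend explicitly on $a$ and $n$, and the terms involving $R_g$ carry a definite sign thanks to $R_g\ge 0$ (with $R_0\ge 0$ controlling the background).

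The range \eqref{a range} should then emerge from requiring the associated quadratic form in the pair $(E_g, T)$ to be positive semidefinite pointwise. Completing the square (equivalently, computing the discriminant of the relevant $2\times 2$ symmetric block) yields a quadratic inequality in $a$ whose roots are exactly
\[
a_\pm \;=\; \frac{(n-2)(n-4)\pm n\sqrt{n^2+4n}}{2(n-1)}.
\]
Once positivity is established, the identity forces $E_g \equiv 0$, and hence $g$ is Einstein.

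The main obstacle is the bookkeeping required to extract the quadratic form in sharp form: carefully tracking all cross terms produced by the conformal changes and by successive integrations by parts, and verifying that the hypothesis $R_g\ge 0$ is precisely what is needed to neutralize the sign-indefinite pieces (in particular, cross terms coupling $\nabla R_g$ to $E_g$, which otherwise cannot be absorbed). The endpoints of \eqref{a range} are expected to be sharp, corresponding to the vanishing of the discriminant, so any slackness in this bookkeeping would immediately degrade the admissible range. The proofs of Theorem \ref{theorem of Vetois} and of Lemma \ref{GM lemma} give the template, but the $\sigma_2$ perturbation shifts the quadratic form non-trivially and its interaction with the $R_g\ge 0$ assumption must be handled with care.
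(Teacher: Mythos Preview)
Your high-level strategy (derive an integral identity, use the range on $a$ to force a quadratic expression to have a sign, conclude $E_g\equiv 0$) matches the paper's, but several of the concrete pieces you name are off, and one key ingredient is missing.

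First, on a conformally Einstein background the traceless Hessian of $u$ is not an independent tensor $T$: identity \eqref{E_ij} says precisely that $u\,E_g = -(n-2)\bigl(\nabla^2 u - \tfrac{\Delta_g u}{n}g\bigr)$, so your ``$T$'' is a scalar multiple of $E_g$. Consequently the quadratic form that must be controlled is not in the pair $(E_g,T)$ but rather in the pair $\bigl(\sqrt{u}\,\nabla R_g,\ u^{-1/2}|E_g|\,\nabla u\bigr)$, with cross term $E_g(\nabla R_g,\nabla u)$ coming from the Case--Gursky--V\'etois identity (Theorem~\ref{thm: identity}). The range \eqref{a range} arises from absorbing this cross term into the diagonal pieces $u|\nabla R_g|^2$ and $|E_g|^2 u^{-1}|\nabla u|^2$.

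Second, the precise endpoints you quote are produced by the sharp bound of Lemma~\ref{lem: A}, namely $|E_g(\nabla R_g,\nabla u)|\le \sqrt{(n-1)/n}\,|E_g|\,|\nabla R_g|\,|\nabla u|$ for the traceless tensor $E_g$. A plain Cauchy--Schwarz without the $\sqrt{(n-1)/n}$ factor yields only Case's original (smaller) interval $B_1$; your proposal does not identify this ingredient, so as written it cannot reach \eqref{a range}.

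Third, Lemma~\ref{GM lemma} plays no role here: $R_g\ge 0$ is a hypothesis, not something to be deduced from $Q$-curvature positivity. Finally, the case $R_0=0$ is not handled by the integral identity at all (the coercive term $R_0\int |E_g|^2 u^{-1}$ vanishes, and $R_g\ge 0$ need not be strict); the paper disposes of it directly via the Yamabe equation $-\tfrac{4(n-1)}{n-2}\Delta_{g_0} u^{(n-2)/2}=R_g u^{(n+2)/2}\ge 0$, which after integration forces $R_g\equiv 0$ and hence $u$ constant.
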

		\begin{remark}
		In original version of Case's theorem,  he considered the following interval $$B_1:=[\frac{n^2-7n+8-\sqrt{n^4+2n^3-3n^2}}{2(n-1)}, \frac{n^2-7n+8+\sqrt{n^4+2n^3-3n^2}}{2(n-1)}].$$
		It is easy to check that
		$$B_1\subset [\frac{(n-2)(n-4)-n\sqrt{n^2+4n}}{2(n-1)},\frac{(n-2)(n-4)+n\sqrt{n^2+4n}}{2(n-1)}].$$
		In fact, with the help of the sharp inequality Lemma \ref{lem: A}, we slightly extend the range of the constant  $a$.
		However, we still  do not know how to get the optimal range which is a very interesting question.
	\end{remark}

	To prove Theorem \ref{theorem case}, Case established a remarkable identity (See Lemma 3.2 in \cite{Case}) as follows
	\begin{align}
			0=&\frac{1}{2}\int_Mu|\nabla R_g|^2\ud v_g+\frac{n-(n-1)(a+4)}{n-2}\int_M E_g(\nabla R_g,\nabla u)\ud v_g\label{Case's identity}\\
		&+\frac{n(n-1)^2(a+4)}{2(n-2)^2}\int_M|E_g|^2u^{-1}|\nabla u|^2\ud v_g\nonumber \\
		&+\frac{(n-1)(a+4)+2n^2-4n}{2(n-2)^2}\int_Mu|E_g|^2R_g\ud v_g\nonumber\\
		&+\frac{(n-1)(a+4) R_0}{2(n-2)^2}\int_M|E_g|^2u^{-1}\ud v_g\nonumber
	\end{align}
	under the assumption that $I_a(g)$ is a constant.   Such formula is also obtained by Gursky(See the equation (1.13) in \cite{Gur 97 CMP}) on $\mathbb{S}^4$ and V\'etois (See equation (1.4) in \cite{Vetois}) for $a=0$.

In \cite{Vetois}, V\'etois established a more general Liouville-type theorem by considering the following equations
\begin{align}
	&P_{g_0}\varphi =\varphi^p, \quad n\not=4, \; p\leq\frac{n+4}{n-4},\label{n not4}\\
	&P_{g_0}\varphi=e^{p\varphi},\quad  n=4, \; p\leq 4,\label{n=4}
\end{align}
on compact Einstein manifolds. He showed that the positive solutions to \eqref{n not4} and smooth solutions to \eqref{n=4} must be constant if $(M^n,g)$ is not conformally equivalent to the  round sphere.

Based on Case's formula \eqref{Case's identity} and Gursky's work \cite{Gur 97 CMP}, we observe that we can add the term $I_g(g)$ into the formula  \eqref{Case's identity} without assuming $I_a(g)$ is a constant. Such formula will play an important role in the proof of Liouville type theorems.
On the whole, the  Case-Gursky-V\'etois  formula can be written as follows
\begin{align}
		&2(n-1)^2\int_M\langle \nabla I_a(g),\nabla u\rangle_g\ud v_g\label{generalize Case formula}\\
		=&\frac{1}{2}\int_Mu|\nabla R_g|^2\ud v_g+\frac{n-(n-1)(a+4)}{n-2}\int_M E_g(\nabla R_g,\nabla u)\ud v_g\nonumber \\
		&+\frac{n(n-1)^2(a+4)}{2(n-2)^2}\int_M|E_g|^2u^{-1}|\nabla u|^2\ud v_g\nonumber\\
		&+\frac{(n-1)(a+4)+2n^2-4n}{2(n-2)^2}\int_Mu|E_g|^2R_g\ud v_g\nonumber\\
		&+\frac{(n-1)(a+4) R_0}{2(n-2)^2}\int_M|E_g|^2u^{-1}\ud v_g.\nonumber
	\end{align}
	With help of such  formula, we are able to streamline   the proof of the Liouville-type theorems of V\'etois(See Theorem 2.1 and Theorem 2.2 in \cite{Vetois}). Besides,  we can  generalize the Liouville-type results from \cite{BVV}, \cite{Brezis-Li}, and \cite{Gidas-Sprcuk CPAM} related to second-order nonlinear equations to fourth-order cases by considering  the perturbation  of linear term of Paneitz operator.  Since the situations differ slightly for $n\geq 5, n=4$ and $ n=3$, we establish the results for each case separately.
	
	\begin{theorem}\label{thm: neq 5}
		Suppose that $(M^n,g_0)$ where  $n\geq 5$  is a compact  Einstein manifold with positive  scalar curvature $R_0$.  Consider the positive solution $\varphi$ to  the equation
		\begin{equation}\label{nonlinear for n geq 5}
			P_{g_0}\varphi-\varepsilon\varphi=\varphi^{p}
		\end{equation}
		where $0\leq \varepsilon<\frac{n-4}{2}Q_0$ and $p\leq \frac{n+4}{n-4}$. If $(M^n,g_0)$ is conformally equivalent to round sphere, we additionally assume that $\varepsilon+\frac{n+4}{n-4}-p>0$. Then $\varphi$ must be a  constant.
	\end{theorem}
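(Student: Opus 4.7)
The plan is to rewrite \eqref{nonlinear for n geq 5} as an equation for the $Q$-curvature of a suitable conformal metric and then apply the identity \eqref{generalize Case formula} with $a=0$. Setting $g=\varphi^{4/(n-4)}g_0=u^2 g_0$ with $u=\varphi^{2/(n-4)}$, the conformal transformation law \eqref{conformal Q-curvature n not 4} together with \eqref{nonlinear for n geq 5} gives
$$Q_g=\frac{2}{n-4}\left(u^{\gamma}+\varepsilon u^{-4}\right),\qquad \gamma:=\frac{(n-4)p-(n+4)}{2}\le 0.$$
Since both exponents $\gamma$ and $-4$ are non-positive and both coefficients non-negative, $Q_g$ is a non-increasing function of $u$; hence $\langle\nabla Q_g,\nabla u\rangle_g\le 0$ pointwise, so the LHS of \eqref{generalize Case formula} is non-positive.

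My next step is to show the RHS, with $I_0(g)=Q_g$, is non-negative. When $a=0$, the only term with a negative coefficient is the cross term $-\tfrac{3n-4}{n-2}\int_M E_g(\nabla R_g,\nabla u)\,\ud v_g$; the remaining four integrals are manifestly non-negative once $R_g\ge 0$. To establish $R_g\ge 0$ I would invoke the Gursky--Malchiodi Lemma~\ref{GM lemma}: the condition $\varepsilon<\tfrac{n-4}{2}Q_0$ is exactly the coercivity of the operator $P_{g_0}-\varepsilon$ required to run that argument. The cross term is then absorbed by Cauchy--Schwarz $|E_g(\nabla R_g,\nabla u)|\le |E_g|_g|\nabla R_g|_g|\nabla u|_g$ followed by AM--GM with weights $u^{\pm 1/2}$, balancing against the two adjacent quadratic terms. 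A direct arithmetic check gives $(3n-4)^2<4n(n-1)^2$ for $n\ge 5$ (equivalently $4n^3-17n^2+28n-16>0$), so the absorption leaves strictly positive leftover coefficients in front of both surviving integrals.

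Combining $\text{LHS}\le 0$ and $\text{RHS}\ge 0$ forces both sides to vanish, together with every non-negative summand on the right. In particular $\tfrac{2(n-1)R_0}{(n-2)^2}\int_M u^{-1}|E_g|^2\,\ud v_g$ vanishes, and since $R_0>0$ and $u>0$ this yields $E_g\equiv 0$, i.e.\ $g$ is Einstein. If $(M^n,g_0)$ is not conformally equivalent to the round sphere, Obata's theorem concludes $u$, hence $\varphi$, is constant. In the spherical case, $g$ being Einstein and conformal to the round sphere forces $g$ itself to be a round sphere, so $Q_g$ is a positive constant; comparing with the explicit formula above yields $u^{\gamma}+\varepsilon u^{-4}\equiv \text{const}$, and the additional hypothesis $\varepsilon+\tfrac{n+4}{n-4}-p>0$ makes this function strictly decreasing in $u$, forcing $u$ (and thus $\varphi$) to be constant.

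The delicate point I anticipate is the derivation of $R_g\ge 0$ in the presence of the linear perturbation $\varepsilon\varphi$; the sharp threshold $\varepsilon<\tfrac{n-4}{2}Q_0$ is exactly the coercivity bound needed for the spectral step in Lemma~\ref{GM lemma} to survive, and adapting it carefully is where the argument differs from V\'etois'.
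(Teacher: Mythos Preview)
Your proof is correct and follows essentially the same route as the paper: the paper packages your absorption step as Corollary~\ref{cor: rigity inequality about Q} and then applies it exactly as you do. One clarification: the step you flag as ``delicate'' is in fact immediate---Lemma~\ref{GM lemma} requires only $R_{g_0}>0$, $Q_{g_0}\geq 0$, and $Q_g\geq 0$, all of which you already have from your formula $Q_g=\frac{2}{n-4}(u^{\gamma}+\varepsilon u^{-4})>0$; no coercivity or spectral adaptation is needed, and the hypothesis $\varepsilon<\frac{n-4}{2}Q_0$ plays no role in the proof itself (it is merely a necessary condition for positive solutions to exist, as noted in the remark following the theorem). Your treatment of the spherical case via strict monotonicity of $u\mapsto u^{\gamma}+\varepsilon u^{-4}$ is a valid alternative to the paper's more direct argument, which observes that the strict sign of $\langle\nabla Q_g,\nabla u\rangle_g$ combined with the rigidity inequality forces $\nabla\varphi\equiv 0$ without first passing through $E_g\equiv 0$.
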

	\begin{remark} By integrating the equation  \eqref{nonlinear for n geq 5} over $(M,g_0)$, it is easy to see that the condition
		$\varepsilon<\frac{n-4}{2}Q_0$ is necessary for $\varphi>0.$
	\end{remark}
	
	\begin{theorem}\label{thm: n=4}
		Suppose that $(M^4,g_0)$  is a compact  Einstein manifold with positive  scalar curvature $R_0$.  Consider the  solution $\varphi$ to  the equation
		\begin{equation}\label{nonlinear for n =4}
			P_{g_0}\varphi+Q_0-\varepsilon=e^{p\varphi}
		\end{equation}
		where $0\leq  \varepsilon<  Q_0$ and $p\leq 4$. If $(M^4,g_0)$ is conformally equivalent to round sphere, we additionally assume that $\varepsilon+4-p>0$. Then $\varphi$ must be a  constant.
	\end{theorem}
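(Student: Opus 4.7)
The plan is to follow the same strategy as in Theorem \ref{thm: neq 5} and apply the Case-Gursky-V\'etois identity \eqref{generalize Case formula} with $n=4$ and $a=0$. Set $\tilde g = e^{2\varphi}g_0$, which agrees with the convention $g = u^2 g_0$ upon taking $u = e^\varphi$. By \eqref{conformal Q-curvature for n=4}, equation \eqref{nonlinear for n =4} becomes $Q_{\tilde g}\,e^{4\varphi} = e^{p\varphi} + \varepsilon$, so
\[
Q_{\tilde g} = e^{(p-4)\varphi} + \varepsilon e^{-4\varphi}.
\]

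First I would compute the left-hand side of \eqref{generalize Case formula}. Using $\nabla^{g_0} Q_{\tilde g} = \bigl[(p-4)e^{(p-4)\varphi} - 4\varepsilon e^{-4\varphi}\bigr]\nabla^{g_0}\varphi$, $\nabla^{g_0} u = e^\varphi \nabla^{g_0}\varphi$, and $\ud v_{\tilde g} = e^{4\varphi}\ud v_{g_0}$, a short conformal computation yields
\[
18\int_M \langle \nabla Q_{\tilde g}, \nabla u\rangle_{\tilde g}\, \ud v_{\tilde g} = 18\int_M \bigl[(p-4)e^{(p-1)\varphi} - 4\varepsilon e^{-\varphi}\bigr]|\nabla\varphi|^2_{g_0}\, \ud v_{g_0},
\]
which is nonpositive since $p \leq 4$ and $\varepsilon \geq 0$.

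Next, I would verify that the right-hand side of \eqref{generalize Case formula}, specialized to $n=4$ and $a=0$, is nonnegative. The three terms involving $|\nabla R_{\tilde g}|^2$, $E_{\tilde g}(\nabla R_{\tilde g},\nabla u)$, and $|E_{\tilde g}|^2 u^{-1}|\nabla u|^2$ combine pointwise into a nonnegative quadratic form via completing the square (the relevant discriminant $4^2 - 4\cdot\tfrac12\cdot 18 = -20$ is negative). The term proportional to $R_0$ is nonnegative since $R_0 > 0$. The main obstacle is the term $\tfrac{7}{2}\int u |E_{\tilde g}|^2 R_{\tilde g}\,\ud v_{\tilde g}$, whose sign depends on $R_{\tilde g}$; I would handle this by invoking the Gursky-Malchiodi lemma (Lemma \ref{GM lemma}), together with the pointwise positivity $Q_{\tilde g} > 0$ afforded by $\varepsilon \geq 0$ and $e^{(p-4)\varphi} > 0$, to secure $R_{\tilde g} \geq 0$, as in the treatments of \cite{G-M} and \cite{Vetois}.

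Once the two sides agree and each nonnegative piece on the right vanishes, we obtain $E_{\tilde g} \equiv 0$, so $\tilde g$ is Einstein. If $(M^4,g_0)$ is not conformally equivalent to the round sphere, Obata's theorem forces $\varphi$ to be constant. If $(M^4,g_0)$ is conformally round, then $\tilde g$ Einstein implies $Q_{\tilde g}$ is some constant $c$, so $e^{(p-4)\varphi} + \varepsilon e^{-4\varphi} \equiv c$ on $M$. The derivative of the real function $t \mapsto e^{(p-4)t} + \varepsilon e^{-4t}$ equals $(p-4)e^{(p-4)t} - 4\varepsilon e^{-4t}$, which is strictly negative exactly when $\varepsilon + 4 - p > 0$; the map is therefore injective and $\varphi$ must be constant.
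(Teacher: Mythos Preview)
Your proposal is correct and follows essentially the same route as the paper: compute $Q_{\tilde g}$ from \eqref{conformal Q-curvature for n=4}, apply Lemma~\ref{GM lemma} to get $R_{\tilde g}>0$, and feed $u=e^{\varphi}$ into the $a=0$ identity so that the left side is $\leq 0$ while the right side is $\geq 0$. Your ``completing the square'' step is exactly the content of Corollary~\ref{cor: rigity inequality about Q}, which the paper invokes instead of redoing the quadratic-form estimate; your discriminant check $16-36<0$ matches the Young-inequality bound used there.

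The only genuine deviation is in the round-sphere endgame. The paper (following the pattern of the $n\geq 5$ proof) observes that under $\varepsilon+4-p>0$ the integrand on the left, namely $[(p-4)e^{(p-1)\varphi}-4\varepsilon e^{-\varphi}]|\nabla\varphi|^2_{g_0}$, is strictly negative wherever $\nabla\varphi\neq 0$, so the vanishing of the left integral forces $|\nabla\varphi|\equiv 0$ directly. You instead first extract $E_{\tilde g}\equiv 0$ from the right side, deduce that $Q_{\tilde g}$ is constant, and then argue that $t\mapsto e^{(p-4)t}+\varepsilon e^{-4t}$ is strictly monotone under $\varepsilon+4-p>0$. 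Both arguments are valid; the paper's is marginally shorter, while yours makes the role of the hypothesis $\varepsilon+4-p>0$ transparent as an injectivity condition.
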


	\begin{theorem}\label{thm: n=3}
		Suppose that $(M^3,g_0)$  is a compact  Einstein manifold with positive  scalar curvature $R_0$.  Consider the  positive solution $\varphi$ to  the equation
		\begin{equation}\label{nonlinear for n =3}
			P_{g_0}\varphi+\varepsilon\varphi =-\varphi^{p}
		\end{equation}
		where $0\leq  \varepsilon<  \frac{Q_0}{2}$ and $p\geq -7$. If $(M^3,g_0)$ is conformally equivalent to round sphere, we additionally assume that $\varepsilon+7+p>0$. Then $\varphi$ must be a  constant.
	\end{theorem}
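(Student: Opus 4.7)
The plan is to exploit the generalized Case-Gursky-V\'etois identity \eqref{generalize Case formula}. Setting $u:=\varphi^{-2}$ and $g:=\varphi^{-4}g_{0}=u^{2}g_{0}$, the conformal covariance \eqref{conformal Q-curvature n not 4} in dimension three reads $P_{g_{0}}\varphi=-\tfrac{1}{2}Q_{g}\varphi^{-7}$, so equation \eqref{nonlinear for n =3} is equivalent to
\begin{equation*}
Q_{g}=2\varphi^{p+7}+2\varepsilon\varphi^{8}.
\end{equation*}
Under the hypotheses $p+7\geq 0$ and $\varepsilon\geq 0$, the right-hand side is non-decreasing in $\varphi$; combined with $\nabla u=-2\varphi^{-3}\nabla\varphi$, this yields the pointwise inequality $\langle\nabla Q_{g},\nabla u\rangle_{g}\leq 0$, so that $\int_{M}\langle\nabla Q_{g},\nabla u\rangle_{g}\,dv_{g}\leq 0$, with strict inequality wherever $|\nabla\varphi|>0$ provided $\varepsilon+7+p>0$.

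Next I apply \eqref{generalize Case formula} with $a=0$: the LHS is exactly $2(n-1)^{2}\int_{M}\langle\nabla Q_{g},\nabla u\rangle_{g}\,dv_{g}\leq 0$. On the RHS, the integrals of $u|\nabla R_{g}|^{2}$, $|E_{g}|^{2}u^{-1}|\nabla u|^{2}$ and $|E_{g}|^{2}u^{-1}$ (with their respective coefficients, all positive for $n=3$ since $R_{0}>0$) are manifestly non-negative, and the cross term $\int E_{g}(\nabla R_{g},\nabla u)\,dv_{g}$ is absorbed into the first two via the sharp trace-free Cauchy-Schwarz estimate of Lemma \ref{lem: A}. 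The remaining term $\frac{n^{2}-2}{(n-2)^{2}}\int u|E_{g}|^{2}R_{g}\,dv_{g}$ requires additional care since $R_{g}$ has no a priori sign; I would handle it by inserting the conformal transformation formula $R_{g}=u^{-2}R_{0}-4u^{-3}\Delta_{g_{0}}u+2u^{-4}|\nabla u|_{g_{0}}^{2}$, integrating by parts to trade the $\Delta_{g_{0}}u$-piece for $|\nabla u|_{g_{0}}^{2}$-terms, and verifying that the result merges with the already identified non-negative contributions to give an overall non-negative RHS.

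Once both sides of \eqref{generalize Case formula} are forced to vanish, we obtain $|E_{g}|\equiv 0$ and $\nabla R_{g}\equiv 0$, so $g$ is Einstein. By the Obata-type theorem recalled after \eqref{Einstein paneitz}, either $\varphi$ is constant as desired, or $(M,g_{0})$ is conformally equivalent to $\ms^{3}$ and $g$ is a round metric. In the latter case $Q_{g}$ is a positive constant, so $2\varphi^{p+7}+2\varepsilon\varphi^{8}$ is constant on $M$; under the extra hypothesis $\varepsilon+7+p>0$ the map $s\mapsto 2s^{p+7}+2\varepsilon s^{8}$ is strictly monotone on $(0,\infty)$, forcing $\varphi$ to be constant in this case as well.

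The principal obstacle I anticipate is the sign analysis of $\int_{M}u|E_{g}|^{2}R_{g}\,dv_{g}$ in dimension three, where one has no a priori sign on $R_{g}$ unlike in Theorems \ref{thm: neq 5} and \ref{thm: n=4}. Making the argument work likely relies crucially on the upper bound $\varepsilon<Q_{0}/2$ to control the $R_{g}$-contributions after conformal expansion, and may force us to use a sharper value of $a$ inside the range \eqref{a range} rather than $a=0$, in order to render the RHS of \eqref{generalize Case formula} manifestly non-negative.
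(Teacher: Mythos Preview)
Your setup is correct and matches the paper exactly: with $u=\varphi^{-2}$ and $g=u^{2}g_{0}$ one has $Q_{g}=2\varphi^{p+7}+2\varepsilon\varphi^{8}$ and $\langle\nabla Q_{g},\nabla u\rangle_{g}\leq 0$. However, the ``principal obstacle'' you identify is not an obstacle at all, and your proposed workaround (expanding $R_{g}$ conformally and integrating by parts) is both unnecessary and not carried out. The point you are missing is Lemma~\ref{GM lemma}: since $(M^{3},g_{0})$ is Einstein with $R_{0}>0$ (hence $Q_{0}>0$) and the conformal metric $g$ has $Q_{g}=2\varphi^{p+7}+2\varepsilon\varphi^{8}>0$, the Gursky--Malchiodi/Hang--Yang strong maximum principle gives $R_{g}>0$ outright. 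With $R_{g}>0$ in hand, the right-hand side of Corollary~\ref{cor: rigity inequality about Q} is manifestly non-negative and the argument closes immediately, exactly as in the proof of Theorem~\ref{thm: neq 5}. No conformal expansion of $R_{g}$, no tuning of the parameter $a$, and no use of the bound $\varepsilon<Q_{0}/2$ is required in the proof itself (that bound is only a necessary condition for a positive solution to exist).

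A secondary remark: once $R_{g}>0$ is known, the sphere case is handled more directly than in your proposal. When $\varepsilon+7+p>0$ one has $\langle\nabla Q_{g},\nabla u\rangle_{g}<0$ wherever $\nabla\varphi\neq 0$, so the inequality of Corollary~\ref{cor: rigity inequality about Q} forces $\nabla\varphi\equiv 0$ immediately; there is no need to pass through the Obata classification and then argue by monotonicity of $s\mapsto 2s^{p+7}+2\varepsilon s^{8}$.
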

	
	This result recovers Conjecture 1.1 of Hang and Yang \cite{Hang-Yang 20 IMRN}, which addresses the standard sphere $\mathbb{S}^3$ via an Obata-type proof.  Firstly,   Zhang \cite{Zhang SH}  solved this conjecture by transforming the equation on the sphere into Euclidean space and applying the moving plane method.  Later, Hyder and Ngô \cite{Hyder-Ngo}  generalized this theorem to higher order cases by using similar moving plane method.
	
	We should point out that  their approaches  can only handle small values of $\varepsilon$ since their proofs need a compactness theorem.  When    $p=-7$ and $(M^3,g_0)$ is the   round sphere $\mathbb{S}^3$, Theorem \ref{thm: n=3} establishes  this Liouville-type theorem under the optimal range  $0<\varepsilon<\frac{Q_0}{2}$ via an Obata type argument.

	In fact, with the  help of the strong maximum principle established by Gursky and Malchiodi (Theorem 2.2 in \cite{G-M}) and Hang-Yang (Proposition 2.1 in  \cite{Hang-Yang}), we are able to establish a more general result as below.
	
	\begin{theorem}\label{thm: f(u) n not 4}
		Suppose that $(M^n,g_0)$ where  $n\geq 3$ and $n\not=4$  is a compact  Einstein manifold with positive  scalar curvature $R_0$.
		Consider the solution $\varphi\in C^4(M^n,g_0)$ to  the following equation
		$$P_{g_0}\varphi =\frac{n-4}{2}f(\varphi)$$
		where $f(t)\geq 0$  for all $t\in\mathbb{R}$ is a smooth function  satisfying
		\begin{equation}\label{condition for f(t)}
			\frac{n-4}{2}	\partial_t\left(t^{-\frac{n+4}{n-4}}f(t)\right)\leq 0, \quad \forall \; t>0.
		\end{equation}
		If $(M^n,g_0)$ is conformally equivalent to the round sphere, we additionally assume that  the inequality in \eqref{condition for f(t)} is strict. Then $\varphi$ must be a constant.
	\end{theorem}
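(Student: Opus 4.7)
The plan is to transfer the equation to a conformal Q-curvature equation, apply the Case-Gursky-V\'etois identity \eqref{generalize Case formula} at $a=0$, and extract rigidity from the sign mismatch between the two sides of that identity.

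First, to define the conformal factor $u:=\varphi^{2/(n-4)}$ and the metric $g:=u^2g_0$ one needs $\varphi>0$. For this I would invoke the strong maximum principle of Gursky-Malchiodi (for $n\geq 5$) and Hang-Yang (for $n=3$) applied to $P_{g_0}\varphi=\tfrac{n-4}{2}f(\varphi)$, using $R_0>0$ and $f(\varphi)\geq 0$; the same maximum principle also delivers $R_g\geq 0$.

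Next, \eqref{conformal Q-curvature n not 4} turns the equation into $Q_g=F(\varphi)$ with $F(t):=t^{-(n+4)/(n-4)}f(t)$, and the hypothesis \eqref{condition for f(t)} reads $\tfrac{n-4}{2}F'(t)\leq 0$ for $t>0$. With $a=0$ (so $I_0(g)=Q_g$), the left-hand side of \eqref{generalize Case formula} is $2(n-1)^2\int_M\langle\nabla Q_g,\nabla u\rangle_g\,dv_g$, and a direct computation gives
\[
\langle\nabla Q_g,\nabla u\rangle_g=\frac{2}{n-4}\,\varphi^{(6-n)/(n-4)}\,F'(\varphi)\,|\nabla\varphi|^2_g.
\]
The factors $\tfrac{2}{n-4}$ and $F'(\varphi)$ always have opposite signs (whether $n\geq 5$ or $n=3$), so the integrand is pointwise non-positive; hence the LHS is $\leq 0$. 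For the RHS at $a=0$, the four terms $\tfrac{1}{2}u|\nabla R_g|^2$, $|E_g|^2u^{-1}|\nabla u|^2$, $u|E_g|^2R_g$, and $|E_g|^2u^{-1}$ carry positive coefficients and, together with $R_g\geq 0$ and $R_0>0$, are each non-negative; the cross term $E_g(\nabla R_g,\nabla u)$ is absorbed into them by Cauchy-Schwarz---this is precisely the algebraic content of the range \eqref{a range}, which contains $a=0$. Therefore the RHS is $\geq 0$.

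Both sides of \eqref{generalize Case formula} must then vanish, forcing $E_g\equiv 0$ and $\nabla R_g\equiv 0$, so $g$ is Einstein. Obata's theorem applied to the conformally related Einstein metrics $g$ and $g_0$ gives either that $u$ is constant (in which case $\varphi$ is constant, and we are done) or that $(M,g_0)$ is conformally equivalent to the round sphere. In the sphere case, the strict version of \eqref{condition for f(t)} makes the LHS integrand strictly negative wherever $\nabla\varphi\neq 0$; since its integral is zero, $\nabla\varphi\equiv 0$ and $\varphi$ is constant. The main obstacle is the careful use of the strong maximum principles to secure both $\varphi>0$ and $R_g\geq 0$ for a general nonlinearity $f$; once that is in hand, the identity \eqref{generalize Case formula} produces the Einstein rigidity essentially automatically.
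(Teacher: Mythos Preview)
Your proposal is correct and follows essentially the same approach as the paper. The only cosmetic differences are that the paper packages the Cauchy--Schwarz absorption of the cross term into Corollary~\ref{cor: rigity inequality about Q} and then invokes that inequality directly, and that Lemma~\ref{GM lemma} in fact yields $R_g>0$ strictly (not just $R_g\geq 0$); otherwise the logical skeleton---strong maximum principle for $\varphi>0$, Lemma~\ref{GM lemma} for the sign of $R_g$, the sign computation of $\langle\nabla Q_g,\nabla u\rangle_g$, the rigidity $E_g\equiv 0$, and Obata plus the strict-inequality argument on the sphere---is identical.
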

	For $n=4$, a similar result holds.
	\begin{theorem}\label{thm: f(u) n =4}
		Suppose that $(M^4,g_0)$ is a four-dimensional compact  Einstein manifold with positive  scalar curvature $R_0$.
		Consider the solution $\varphi\in C^4(M^4,g_0)$ to the  following equation
		$$P_{g_0}\varphi+ Q_0 =f(e^{\varphi})$$
		where $f(t)\geq 0$  for all $t>0$ is a smooth function  satisfying
		\begin{equation}\label{condition for f(t) n=4}
			\partial_t\left(t^{-4}f(t)\right)\leq 0, \quad \forall \; t>0.
		\end{equation}
		If $(M^4,g_0)$ is conformally equivalent to the round sphere, we additionally assume that  the inequality in \eqref{condition for f(t) n=4} is strict. Then $\varphi$ must be a constant.
	\end{theorem}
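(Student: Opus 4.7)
The plan is to mirror the proof of Theorem \ref{thm: n=4} by applying the Case-Gursky-V\'etois identity \eqref{generalize Case formula} in dimension $n=4$ with $a=0$, after recasting \eqref{condition for f(t) n=4} as a sign condition on a directional derivative of $Q_g$. Setting $u=e^{\varphi}$ so that $g:=e^{2\varphi}g_0=u^2 g_0$, the conformal change formula \eqref{conformal Q-curvature for n=4} rewrites the equation as $Q_g=u^{-4}f(u)=:h(u)$, and the hypothesis \eqref{condition for f(t) n=4} is precisely $h'(u)\le 0$ for all $u>0$, strict when $(M,g_0)$ is conformally equivalent to the round sphere. Consequently
\[
\int_M \langle \nabla Q_g,\nabla u\rangle_g\,\ud v_g = \int_M h'(u)\,|\nabla u|^2_g\,\ud v_g \le 0.
\]

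Next I would upgrade the pointwise bound $Q_g\ge 0$ (inherited from $f\ge 0$) to $R_g\ge 0$ via a strong maximum principle; in dimension $4$ this should follow from the positivity of $P_{g_0}$ on the Einstein background (guaranteed by $R_0>0$, $Q_0>0$), in the same spirit as Theorem~2.2 of \cite{G-M} and Proposition~2.1 of \cite{Hang-Yang}. Plugging $n=4$ and $a=0$ into \eqref{generalize Case formula}, the left-hand side becomes $18\int_M\langle\nabla Q_g,\nabla u\rangle_g\,\ud v_g$, while the right-hand side is the sum of the gradient term $\frac{1}{2}\int_M u|\nabla R_g|^2\,\ud v_g$, the cross term $-4\int_M E_g(\nabla R_g,\nabla u)\,\ud v_g$, and three manifestly non-negative contributions $18\int_M|E_g|^2 u^{-1}|\nabla u|^2\,\ud v_g$, $\frac{7}{2}\int_M u|E_g|^2 R_g\,\ud v_g$, and $\frac{3R_0}{2}\int_M|E_g|^2 u^{-1}\,\ud v_g$. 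I would absorb the cross term using the sharp pointwise inequality Lemma~\ref{lem: A} together with a weighted Cauchy--Schwarz,
\[
4\,|E_g(\nabla R_g,\nabla u)| \le \frac{1}{2}\,u|\nabla R_g|^2 + C\,|E_g|^2 u^{-1}|\nabla u|^2,
\]
for some constant $C<18$, turning the right-hand side into a sum of non-negative integrals.

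Combining with the non-positive left-hand side then forces every such integrand to vanish. In particular, $\frac{3R_0}{2}\int_M|E_g|^2 u^{-1}\,\ud v_g=0$ together with $R_0>0$ yields $E_g\equiv 0$, so $g$ is Einstein. If $(M,g_0)$ is not conformally equivalent to the round sphere, Obata's theorem then forces $u$, and hence $\varphi$, to be constant. In the remaining round-sphere case the strict inequality in \eqref{condition for f(t) n=4} gives $h'(u)<0$ pointwise, and the identity $\int_M h'(u)|\nabla u|^2_g\,\ud v_g=0$ forces $|\nabla u|\equiv 0$ directly.

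The main obstacle is the dimension-$4$ strong maximum principle used to secure $R_g\ge 0$: the references cited in the paper cover only $n\ge 5$ (Gursky--Malchiodi) and $n=3$ (Hang--Yang), so for $n=4$ one has to run a separate argument exploiting the positivity of $P_{g_0}$ on the Einstein background $(M^4,g_0)$ with $R_0>0$ to pass from the sign of $Q_g$ to the sign of $R_g$. A secondary technical point is ensuring that the absorption constant $C$ coming from Lemma~\ref{lem: A} is \emph{strictly} less than $18$; this strict inequality is what leaves a positive margin on the coefficient of $\int_M|E_g|^2 u^{-1}|\nabla u|^2$ and ultimately distinguishes the sharp Gursky--Malchiodi--Case inequality from a naive Cauchy--Schwarz.
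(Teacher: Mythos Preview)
Your proposal is correct and follows essentially the same route as the paper: set $u=e^{\varphi}$, rewrite the equation as $Q_g=u^{-4}f(u)\ge 0$, use the sign of $\partial_t(t^{-4}f(t))$ to make $\int_M\langle\nabla Q_g,\nabla u\rangle_g\,\ud v_g\le 0$, feed this into the $a=0$ identity, absorb the cross term, and conclude $E_g\equiv 0$ (then Obata, or the strict inequality on the sphere). The paper packages your absorption step into Corollary~\ref{cor: rigity inequality about Q}, but the content is the same.

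Your flagged ``main obstacle'' is not actually an obstacle in this paper: Lemma~\ref{GM lemma} is stated here for all $n\ge 3$ (with the additional citation to V\'etois, Theorem~2.3), and the paper invokes it directly in dimension $4$ to obtain $R_g>0$, not merely $R_g\ge 0$. So you should simply quote Lemma~\ref{GM lemma} rather than worry about reproving a four-dimensional maximum principle. Your secondary concern about the absorption constant is also moot: with Lemma~\ref{lem: A} and Young's inequality one gets $C=6<18$, and in fact the paper's own Corollary~\ref{cor: rigity inequality about Q} uses only the crude Cauchy--Schwarz (no Lemma~\ref{lem: A}) and still succeeds, absorbing the full $18$ into the $|E_g|^2u^{-1}|\nabla u|^2$ term while keeping a positive remainder $\frac{5}{18}\int_M u|\nabla R_g|^2\,\ud v_g$.
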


	This paper is organized as follows. In Section \ref{sect:2}, we prove  the  Case-Gursky-V\'etois  formula  by following the argument of Gursky \cite{Gur 97 CMP} and Case \cite{Case}. With the help of such identity,  we establish two  Case-Gursky-V\'etois   rigidity inequalities. Finally, in Section \ref{sec:4}, with the help of rigidity inequalities,  we give the proofs of Theorem \ref{thm: neq 5}, Theorem \ref{thm: n=4}, Theorem \ref{thm: n=3}, Theorem \ref{thm: f(u) n not 4} and Theorem \ref{thm: f(u) n =4}.

	\section{Case-Gursky-V\'etois  identity on conformal Einstein manifolds}\label{sect:2}

	 Before doing so, we introduce some notations for later use.
	Notice that
	$$\sigma_2(A_g)=-\frac{|E_g|_g^2}{2(n-2)^2}+\frac{1}{8(n-1)n}R^2_g.$$
	Then, with help of  \eqref{def of Q-curvature} and \eqref{I_a curvature }, one has
	\begin{equation}\label{I_a curvature repre}
		2(n-1)I_a(g)=-\Delta_g R_g-\alpha_1|E_g|_g^2+\alpha_2R_g^2
	\end{equation}
	where there constants $\alpha_1$ and $\alpha_2$ are defined as follows
	\begin{equation}\label{def alpha_1,2}
		\alpha_1=\frac{(n-1)(4+a)}{(n-2)^2}, \quad \alpha_2=\frac{(n-1)a+n^2-4}{4(n-1)n}.
	\end{equation}

	Now,  we are going to give give the proof of Case-Gursky-V\'etois identity \eqref{generalize Case formula}. In the proof of this  formula,  we basically follow Case \cite{Case}, using idea of Gursky \cite{Gur 97 CMP}.   The key observation  during Gursky's  proof (1997, CMP, Page 660) is that integrating
	$u\langle E_g, \nabla^2R_g\rangle_g $ over $(M,g)$, then insert the representations of $\Delta_g u$ and $\Delta_g R_g$.  Here,  we need’t assume that $I_a(g)$ is a constant. Then we obtain an integral identity (Theorem 2.1).  We should point out that
	this  identity can  also be obtained by inserting the tensor $T$ defined in Page 4 of \cite{Case}  into  the equation (1.6) without assuming $I_a(g)$ is a constant.

	\begin{theorem}\label{thm: identity}(Case-Gursky-V\'etois identity)
		Suppose that $(M^n,g_0)$ is an Einstein manifold with constant scalar curvature $R_0$ and $n\geq 3$. Consider a conformal metric
		$g=u^2g_0$ where $u>0$.  Then there holds
		\begin{align*}
		&2(n-1)^2\int_M\langle \nabla I_a(g),\nabla u\rangle_g\ud v_g\\
		=&\frac{1}{2}\int_Mu|\nabla R_g|^2\ud v_g+\frac{n-(n-1)(a+4)}{n-2}\int_M E_g(\nabla R_g,\nabla u)\ud v_g\\
		&+\frac{n(n-1)^2(a+4)}{2(n-2)^2}\int_M|E_g|^2u^{-1}|\nabla u|^2\ud v_g\\
		&+\frac{(n-1)(a+4)+2n^2-4n}{2(n-2)^2}\int_Mu|E_g|^2R_g\ud v_g\\
		&+\frac{(n-1)(a+4) R_0}{2(n-2)^2}\int_M|E_g|^2u^{-1}\ud v_g.
	\end{align*}

	\end{theorem}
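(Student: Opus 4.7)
Proof plan. The approach follows Gursky's 1997 calculation on $\mathbb{S}^4$ and its generalization by Case, with the single modification that $I_a(g)$ is not assumed constant; the term $2(n-1)^2\int \langle \nabla I_a,\nabla u\rangle_g\,\ud v_g$ is precisely the boundary contribution that would have vanished under a constancy assumption.

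First I would rewrite the left-hand side using the representation formula \eqref{I_a curvature repre}. Multiplying \eqref{I_a curvature repre} by $\Delta_g u$ and integrating by parts on the closed manifold $(M,g)$ produces
\begin{equation*}
2(n-1)^2\int_M \langle \nabla I_a(g), \nabla u\rangle_g\,\ud v_g \;=\; (n-1)\!\int \Delta_g R_g\,\Delta_g u \;+\;(n-1)\alpha_1\!\int |E_g|_g^2\,\Delta_g u \;-\;(n-1)\alpha_2\!\int R_g^2\,\Delta_g u.
\end{equation*}
All the structural work is then concentrated in the first integral on the right.

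To handle it I would follow Gursky's observation and evaluate the auxiliary integral $\int_M u\langle E_g,\nabla^2 R_g\rangle_g\,\ud v_g$ in two different ways. On one hand, integration by parts using the contracted Bianchi identity $\mathrm{div}_g(E_g)=\frac{n-2}{2n}\nabla R_g$ gives
\begin{equation*}
\int u\langle E_g,\nabla^2 R_g\rangle_g = -\tfrac{n-2}{2n}\int u|\nabla R_g|_g^2 \;-\;\int E_g(\nabla u,\nabla R_g).
\end{equation*}
On the other hand, the conformal-Einstein identity $\mathring{\nabla^2_g u}=-\tfrac{u}{n-2}E_g$, which follows from the transformation law of the trace-free Ricci tensor when $g_0$ is Einstein, lets one rewrite $u\langle E_g,\nabla^2 R_g\rangle = -(n-2)\langle\mathring{\nabla^2_g u},\nabla^2 R_g\rangle$; splitting $\nabla^2 R_g$ into its trace and trace-free parts and applying the commutator identity $\nabla\Delta_g R_g=\Delta_g \nabla R_g+\mathrm{Ric}_g(\nabla R_g,\cdot)$ together with $\mathrm{Ric}_g=E_g+\tfrac{R_g}{n}g$ re-expresses the same integral in terms of $\int \Delta_g R_g\,\Delta_g u$ and $\int \mathrm{Ric}_g(\nabla u,\nabla R_g)$. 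Equating the two evaluations then solves for $(n-1)\int \Delta_g R_g\,\Delta_g u$ as a linear combination of $\int u|\nabla R_g|_g^2$, $\int E_g(\nabla u,\nabla R_g)$, and a residual $\int R_g^2\,\Delta_g u$.

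The last step is to apply the conformal scalar-curvature formula for $g=u^2 g_0$, obtained by writing $v=u^{(n-2)/2}$ in the Yamabe equation and simplifying, namely
\begin{equation*}
2(n-1)\Delta_g u = -uR_g + n(n-1)u^{-1}|\nabla u|_g^2 + R_0\,u^{-1},
\end{equation*}
and substitute this for $\Delta_g u$ in the remaining integrals $\int|E_g|_g^2\Delta_g u$ and $\int R_g^2\Delta_g u$. This directly generates the four $|E_g|^2$-integrals on the right-hand side, with coefficients dictated by $\alpha_1$, and a parallel family of $R_g$-only integrals. The principal obstacle is the ensuing bookkeeping: one must verify that all $R_g^{\ge 2}$-type terms (such as $\int uR_g^3$, $\int u^{-1}R_g^2|\nabla u|_g^2$, and $\int u^{-1}R_g^2$) cancel completely. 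This cancellation is the structural reason the coefficients $\alpha_1=\frac{(n-1)(a+4)}{(n-2)^2}$ and $\alpha_2=\frac{(n-1)a+n^2-4}{4n(n-1)}$ take the specific form \eqref{def alpha_1,2}, and it mirrors Case's observation that $I_a$ is the distinguished combination of $Q_g$ and $\sigma_2(A_g)$ for which such a formula closes; once the cancellation is confirmed, collecting the remaining coefficients recovers the stated identity.
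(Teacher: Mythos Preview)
Your overall architecture matches the paper's proof closely: start from the representation \eqref{I_a curvature repre}, compute $\int_M u\langle E_g,\nabla^2 R_g\rangle_g$ two ways using \eqref{E_ij} and the contracted Bianchi identity, solve for $(n-1)\int_M \Delta_g R_g\,\Delta_g u$, and then insert the formula \eqref{Delta_g u} for $\Delta_g u$. Up to this point you are following Gursky--Case essentially verbatim.

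The genuine gap is the last step. You assert that after substituting \eqref{Delta_g u} into the residual $\int_M R_g^2\,\Delta_g u$, the resulting terms $\int_M uR_g^3$, $\int_M u^{-1}R_g^2|\nabla u|^2_g$, and $\int_M u^{-1}R_g^2$ ``cancel completely,'' and that this cancellation explains the specific form of $\alpha_1,\alpha_2$. Neither claim is correct. The constants $\alpha_1,\alpha_2$ in \eqref{def alpha_1,2} are simply read off from \eqref{I_a curvature repre}; they are not free parameters. And after your substitution the total coefficient in front of $\int_M R_g^2\,\Delta_g u$ is $-\tfrac{1}{2}-(n-1)\alpha_2=-\tfrac{(n-1)a+n^2+2n-4}{4n}$, which is nonzero for all but one exceptional value of $a$. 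The three pure-$R_g$ integrals it generates have no other source in your computation, so they cannot cancel among themselves.

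What is missing is a second use of the conformal--Einstein relation \eqref{E_ij}. The paper does \emph{not} expand $\Delta_g u$ inside the $R_g^2$ term; instead it integrates by parts to write that term as $\int_M R_g\langle\nabla R_g,\nabla u\rangle_g$, then contracts \eqref{E_ij} against $R_g(E_g)_{ij}$ and integrates again to obtain the identity
\[
\int_M R_g\langle\nabla R_g,\nabla u\rangle_g\,\ud v_g
=\frac{2n}{(n-2)^2}\int_M uR_g|E_g|^2_g\,\ud v_g-\frac{2n}{n-2}\int_M E_g(\nabla R_g,\nabla u)\,\ud v_g.
\]
This is the step that converts the pure-$R_g$ integral into the $|E_g|^2$-terms appearing on the right-hand side of the theorem (and is also responsible for the $a$-dependence in the coefficients of $\int_M E_g(\nabla R_g,\nabla u)$ and $\int_M u|E_g|^2_gR_g$). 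Once you insert this identity, the remaining bookkeeping is straightforward; without it the computation does not close.
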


	\begin{proof}
		Since $g_0$ is an Einstein metric, a direct computation yields that
		\begin{equation}\label{E_ij}
			u(E_g)_{ij}=-(n-2)\left(\nabla^2_{ij}u-\frac{\Delta_gu}{n}g_{ij}\right)
		\end{equation}
		and
		\begin{equation}\label{Delta_g u}
			\Delta_gu=\frac{n}{2}u^{-1}|\nabla_g u|^2-\frac{R_gu}{2(n-1)}+\frac{u^{-1}R_0}{2(n-1)}.
		\end{equation}
		
		Multiplying  $\nabla^2_{ij}R_g$ on both sides of the equations \eqref{E_ij} and integrating it over $(M,g)$, then there holds
		\begin{equation}\label{E_ijR_ij}
			\int_Mu\langle E_g, \nabla^2 R_g\rangle_g\ud v_g=-(n-2)\int_M\langle \nabla^2u, \nabla^2 R_g\rangle_g\ud v_g+\frac{n-2}{n}\int_M\Delta_g u\cdot\Delta_g R_g\ud v_g
		\end{equation}
		We are going to deal with these three terms one by one.
		
		Firstly, with the help of second Bianchi identity $\nabla_iE_{ij}=\frac{n-2}{2n}\nabla_jR$ and integration by parts, one has
		\begin{equation}\label{term 1}
			\int_Mu\langle E_g, \nabla^2 R_g\rangle_g\ud v_g=-\frac{n-2}{2n}\int_M u|\nabla R_g|_g^2\ud v_g-\int_ME_g(\nabla u,\nabla R_g)\ud v_g.
		\end{equation}
		
		Secondly, integrating by parts, one has
		\begin{align*}
			&\int_M\langle \nabla^2u, \nabla^2 R_g\rangle_g\ud v_g\\
			=&-\int_M\langle \nabla R_g, \nabla \Delta_g u\rangle_g\ud v_g-\int_M Ric_g(\nabla u, \nabla R_g)\ud v_g\\
			=&\int_M\Delta_gR_g\cdot\Delta_g u\ud v_g-\int_ME_g(\nabla u,\nabla R_g)\ud v_g-\frac{1}{n}\int_MR_g\langle \nabla R_g,\nabla u\rangle_g\ud v_g.
		\end{align*}
		Combing these estimates, we can rewrite \eqref{E_ijR_ij} as follows
		\begin{align}
			0=&-\frac{1}{2n}\int_Mu|\nabla R_g|^2\ud v_g-\frac{n-1}{n-2}\int_ME_g(\nabla u,\nabla R_g)\ud v_g\label{equ:2.8}\\
			&+\frac{n-1}{n}\int_M\Delta_g R_g\cdot\Delta_gu\ud v_g-\frac{1}{n}\int_MR_g\langle \nabla R_g,\nabla u\rangle_g\ud v_g\nonumber
		\end{align}
		
		With the help of  the identities \eqref{I_a curvature repre} and \eqref{Delta_g u}, one has
		\begin{align*}
			&\int_M\Delta_g R_g\cdot\Delta_g u\ud v_g\\
			=&\int_M\left(-2(n-1)I_a-\alpha_1|E_g|^2+\alpha_2R_g^2\right)\cdot\Delta_g u\ud v_g\\
			=&2(n-1)\int_M\langle \nabla I_a,\nabla u\rangle_g\ud v_g-\alpha_1\int_M|E_g|^2\left(\frac{n}{2}u^{-1}|\nabla u|^2-\frac{R_gu}{2(n-1)}+\frac{R_0u^{-1}}{2(n-1)}\right)\ud v_g\\
			&-2\alpha_2\int_MR_g\langle \nabla R_g,\nabla u\rangle_g\ud v_g.
		\end{align*}

		Inserting the above formula into \eqref{equ:2.8},  we obtain that
		\begin{align*}
			&2(n-1)^2\int_M\langle \nabla I_a,\nabla u\rangle_g\ud v_g\\
			=&\frac{1}{2}\int_Mu|\nabla R_g|^2\ud v_g+\frac{n(n-1)}{n-2}\int_M E_g(\nabla R_g,\nabla u)\ud v_g\\
			&+\frac{n(n-1)\alpha_1}{2}\int_M|E_g|^2u^{-1}|\nabla u|^2\ud v_g-\frac{\alpha_1}{2}\int_M|E_g|^2R_gu\ud v_g\\
			&+\frac{\alpha_1 R_0}{2}\int_M|E_g|^2u^{-1}\ud v_g\\
			&+\left(2(n-1)\alpha_2+1\right)\int_MR_g\langle\nabla R_g,\nabla u\rangle_g\ud v_g
		\end{align*}

		Multiplying  $ R_gE_{ij}$ on both sides of  \eqref{E_ij} and integrating by parts, we obtain that
		\begin{align*}
			&\int_MuR_g|E_g|^2\ud v_g\\
			=&-(n-2)\int_MR_g\langle E_g,\nabla^2u\rangle_g\ud v_g\\
			=&\frac{(n-2)^2}{2n}\int_MR_g\langle \nabla R_g,\nabla u\rangle_g\ud v_g+(n-2)\int_ME_g(\nabla R_g,\nabla u)\ud v_g
		\end{align*}
		which is equivalent to
		\begin{equation}\label{R_g nabla R_g nabla u}
			\int_MR_g\langle \nabla R_g,\nabla u\rangle_g\ud v_g=\frac{2n}{(n-2)^2}\int_MuR_g|E_g|^2\ud v_g-\frac{2n}{n-2}\int_ME_g(\nabla R_g,\nabla u)\ud v_g
		\end{equation}
		Inserting  the identity \eqref{R_g nabla R_g nabla u} and the notations \eqref{def alpha_1,2} into \eqref{equ:2.8}, there holds
		\begin{align*}
			&2(n-1)^2\int_M\langle \nabla I_a,\nabla u\rangle_g\ud v_g\\
			=&\frac{1}{2}\int_Mu|\nabla R_g|^2\ud v_g+\frac{n-(n-1)(a+4)}{n-2}\int_M E_g(\nabla R_g,\nabla u)\ud v_g\\
			&+\frac{n(n-1)^2(a+4)}{2(n-2)^2}\int_M|E_g|^2u^{-1}|\nabla u|^2\ud v_g\\
			&+\frac{(n-1)(a+4)+2n^2-4n}{2(n-2)^2}\int_Mu|E_g|^2R_g\ud v_g\\
			&+\frac{(n-1)(a+4) R_0}{2(n-2)^2}\int_M|E_g|^2u^{-1}\ud v_g.
		\end{align*}
		
		Thus we finish our proof.
		
	\end{proof}

	With the help of above identity, we are able to establish the following   Case-Gursky-V\'etois  inequalities.
\begin{corollary}\label{cor: rigity inequality about Q}
	Suppose that $(M^n,g_0)$ where $n\geq 3$ is a compact  Einstein manifold with constant scalar curvature $R_0$. Consider a conformal metric
	$g=u^2g_0$ where $u>0$.
	Then  there holds
	$$(n-2)^2(n-1)\int_M\langle\nabla Q_g,\nabla u\rangle_g\ud v_g\geq \frac{n^2-2}{2(n-1)}\int_Mu|E_g|_g^2R_g\ud v_g+R_0\int_M|E_g|_g^2u^{-1}\ud v_g$$
	with the  equality holds if and only if   $g$ is Einstein.
\end{corollary}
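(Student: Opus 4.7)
The plan is to specialize the Case-Gursky-V\'etois identity (Theorem \ref{thm: identity}) to $a=0$, in which case $I_0(g)=Q_g$ and the identity becomes an expression for $2(n-1)^2\int_M \langle \nabla Q_g,\nabla u\rangle_g\,\ud v_g$ in terms of five explicit integrals. After collecting coefficients and multiplying through by $\frac{(n-2)^2}{2(n-1)}$, the two terms that appear on the right-hand side of the desired inequality, namely $\frac{n^2-2}{2(n-1)}\int_M u|E_g|^2 R_g\,\ud v_g$ and $R_0\int_M |E_g|^2 u^{-1}\,\ud v_g$, come out with exactly the right coefficients and can be moved to the left-hand side. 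The corollary then reduces to establishing the non-negativity of
\begin{equation*}
\frac{(n-2)^2}{4(n-1)}\int_M u|\nabla R_g|^2\,\ud v_g - \frac{(3n-4)(n-2)}{2(n-1)}\int_M E_g(\nabla R_g,\nabla u)\,\ud v_g + n(n-1)\int_M |E_g|^2 u^{-1}|\nabla u|^2\,\ud v_g.
\end{equation*}

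The next step is to verify that the integrand is pointwise non-negative. Setting $X:=u^{1/2}|\nabla R_g|$ and $Y:=u^{-1/2}|E_g|\,|\nabla u|$ and using the pointwise bound $|E_g(\nabla R_g,\nabla u)|\leq |E_g|\,|\nabla R_g|\,|\nabla u|=XY$ to control the cross term reduces the matter to the scalar inequality $\alpha X^2 - |\beta| XY + \gamma Y^2 \geq 0$ with $\alpha=\tfrac{(n-2)^2}{4(n-1)}$, $|\beta|=\tfrac{(3n-4)(n-2)}{2(n-1)}$, $\gamma=n(n-1)$. A direct computation gives
\begin{equation*}
4\alpha\gamma - |\beta|^2 = \frac{(n-2)^2\bigl(4n^3-17n^2+28n-16\bigr)}{4(n-1)^2},
\end{equation*}
and the cubic $4n^3-17n^2+28n-16$ is strictly positive for every $n\geq 3$ (its derivative $12n^2-34n+28$ has negative discriminant and is thus everywhere positive, while the cubic itself equals $23$ at $n=3$). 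So the quadratic form is strictly positive-definite, and the inequality of the corollary follows.

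For the equality case, strict positive-definiteness forces $X\equiv 0$ and $Y\equiv 0$ pointwise on $M$. In particular $\nabla R_g\equiv 0$, so $R_g$ is constant on the connected manifold $M$, and hence by the classical Obata theorem recalled in the introduction the conformal metric $g=u^2 g_0$ must itself be Einstein. Conversely, if $g$ is Einstein then $E_g\equiv 0$ and $R_g$ (hence $Q_g$ by \eqref{def of Q-curvature}) is constant, so every term in the claimed inequality vanishes. The only real technical point in this approach is the elementary numerical inequality $(3n-4)^2<4n(n-1)^2$ for $n\geq 3$, which upgrades positive-semidefiniteness to strict positive-definiteness and thereby pins down the Einstein characterization of equality; no further structural ingredient beyond Theorem \ref{thm: identity} is needed.
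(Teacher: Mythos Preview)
Your proof is correct and follows essentially the same approach as the paper's: both specialize the identity to $a=0$, bound the cross term via $|E_g(\nabla R_g,\nabla u)|\leq |E_g|\,|\nabla R_g|\,|\nabla u|$, and then reduce to the numerical inequality $(3n-4)^2<4n(n-1)^2$ for $n\geq 3$. The only cosmetic difference is that the paper applies Young's inequality to absorb the cross term entirely into the $|E_g|^2u^{-1}|\nabla u|^2$ term (leaving a strictly positive coefficient on $u|\nabla R_g|^2$ to drop), whereas you phrase the same computation as positive-definiteness of a quadratic form in $X,Y$; the discriminant you compute is exactly the numerator of the paper's leftover coefficient, so the two arguments are equivalent.
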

\begin{proof}

	Choosing  $a=0$ in Theorem \ref{thm: identity}, one has
	\begin{align}
		&2(n-1)^2\int_M\langle \nabla Q_g,\nabla u\rangle_g\ud v_g\label{equ:3.1}\\
		=&\frac{1}{2}\int_Mu|\nabla R_g|^2\ud v_g+\frac{4-3n}{n-2}\int_M E_g(\nabla R_g,\nabla u)\ud v_g\nonumber\\
		&+\frac{2n(n-1)^2}{(n-2)^2}\int_M|E_g|^2u^{-1}|\nabla u|^2\ud v_g\nonumber\\
		&+\frac{n^2-2}{(n-2)^2}\int_Mu|E_g|^2R_g\ud v_g+\frac{2(n-1)R_0}{(n-2)^2}\int_M|E_g|^2u^{-1}\ud v_g.\nonumber
	\end{align}
	
	With help of Cauchy inequality and Young's inequality, one has
	$$\frac{4-3n}{n-2}E_g(\nabla R_g,\nabla u)\geq-\frac{2n(n-1)^2}{(n-2)^2}|E_g|^2u^{-1}|\nabla u|^2-\frac{(3n-4)^2}{8n(n-1)^2}u|\nabla R_g|^2.$$
	Inserting it into the above identity \eqref{equ:3.1}, one has
	\begin{align}
		&2(n-1)^2\int_M\langle \nabla Q_g,\nabla u\rangle_g\ud v_g\label{equ:3.2}\\
		\geq &\frac{4(n-4)(n-1)^2+7n^2-8n}{8n(n-1)^2}\int_Mu|\nabla R_g|^2\ud v_g\nonumber\\
		&+\frac{n^2-2}{(n-2)^2}\int_Mu|E_g|^2R_g\ud v_g+\frac{2(n-1)R_0}{(n-2)^2}\int_M|E_g|^2u^{-1}\ud v_g.\nonumber
	\end{align}
	
	Immediately, one has
	\begin{equation}\label{inequality for Q}
		(n-2)^2(n-1)	\int_M\langle\nabla Q_g,\nabla u\rangle_g\ud v_g\geq \frac{n^2-2}{2(n-1)}\int_Mu|E_g|_g^2R_g\ud v_g+R_0\int_M|E_g|_g^2u^{-1}\ud v_g.
	\end{equation}
	On one hand, if the equality holds in \eqref{inequality for Q},  with the  help of the inequality \eqref{equ:3.2}, one must have $|\nabla R_g|=0$ which means that the scalar curvature  $R_g$  is a constant. Then Obata theorem shows that $g$ is Einstein. On the other hand, if $g$ is Einstein, it is not  hard to check that the equality is achieved by \eqref{Einstein Q_0}.
	
	Thus we finish our proof.
	
	\end{proof}

	To slightly extend Case's Theorem \ref{theorem case}, we need the following well known lemma. For reader's convenience, we sketch the proof here.
	\begin{lemma}\label{lem: A}
		Let $A$ be a $n\times n$  symmetric  matrix satisfying $trace(A)=0$. Let $x$ and $y$ be two $n\times 1$ vector. There holds
		$$|x^TAy|\leq \sqrt{\frac{n-1}{n}}|A|\cdot |x|\cdot |y|.$$
	\end{lemma}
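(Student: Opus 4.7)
The plan is to reduce the bilinear estimate to a bound on the operator norm of $A$ by its Frobenius norm, exploiting the trace-free condition to gain the factor $\sqrt{(n-1)/n}$. The first step is the trivial bound
\[
|x^{T}Ay|\leq \|A\|_{\mathrm{op}}|x|\cdot|y|,
\]
which follows directly from Cauchy--Schwarz. Thus it suffices to prove that $\|A\|_{\mathrm{op}}^{2}\leq \frac{n-1}{n}|A|^{2}$, where $|A|$ denotes the Frobenius norm.

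Since $A$ is symmetric, diagonalize it with real eigenvalues $\lambda_{1},\dots,\lambda_{n}$, and order them so that $|\lambda_{1}|=\max_{i}|\lambda_{i}|=\|A\|_{\mathrm{op}}$. The trace-free hypothesis reads $\lambda_{1}=-\sum_{i\geq 2}\lambda_{i}$, so the Cauchy--Schwarz inequality in $\mathbb{R}^{n-1}$ gives
\[
\lambda_{1}^{2}=\Bigl(\sum_{i\geq 2}\lambda_{i}\Bigr)^{2}\leq (n-1)\sum_{i\geq 2}\lambda_{i}^{2}=(n-1)\bigl(|A|^{2}-\lambda_{1}^{2}\bigr).
\]
Rearranging yields $n\lambda_{1}^{2}\leq (n-1)|A|^{2}$, i.e.\ $\|A\|_{\mathrm{op}}\leq \sqrt{(n-1)/n}\,|A|$, which combined with the first step completes the argument.

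There is no real obstacle: the only subtlety is to keep track of the matching norm conventions (Frobenius norm versus operator norm, both of which reduce to expressions in the eigenvalues thanks to symmetry of $A$). Equality holds precisely when one eigenvalue has absolute value $\sqrt{(n-1)/n}\,|A|$ and all others are equal, and when $x,y$ are aligned with the corresponding eigenvector, which is the extremal configuration relevant to achieving sharpness in the subsequent application of the lemma.
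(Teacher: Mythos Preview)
Your argument is correct and follows essentially the same route as the paper: both reduce to the eigenvalue inequality $\lambda_{1}^{2}\leq \frac{n-1}{n}|A|^{2}$ obtained from the trace-free condition via Cauchy--Schwarz on the remaining eigenvalues, and then combine it with the elementary bound $|x^{T}Ay|\leq |\lambda_{1}|\,|x|\,|y|$. The only cosmetic difference is that you phrase the first step in terms of the operator norm, whereas the paper writes $|x^{T}Ay|=|\sum_{i}\lambda_{i}x_{i}y_{i}|$ directly in the eigenbasis and applies Cauchy--Schwarz to $\sum_{i}|x_{i}||y_{i}|$.
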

	\begin{proof}
		With the  help of orthogonality, one may assume that $A$ is a diagonal matrix and let $\lambda_i$ be the diagonal elements where $1\leq i\leq n$ and $|\lambda_i|\leq |\lambda_1|$ for all $1\leq i\leq n$. Notice that
		\begin{equation}\label{|A|leq n/n-1lambda_1}
			|A|^2=\sum^n_i\lambda_i\geq \lambda_1^2+\frac{1}{n-1}\left(\sum_{i=2}^n\lambda_i\right)^2=\frac{n}{n-1}\lambda_1^2.
		\end{equation}
		With the help of \eqref{|A|leq n/n-1lambda_1} and Cauchy inequality, one has
		$$|x^TAy|=|\sum_i\lambda_ix_iy_i|\leq |\lambda_1|\cdot|x|\cdot|y|\leq \sqrt{\frac{n-1}{n}}|A|\cdot |x|\cdot|y|.$$
	\end{proof}

		\begin{corollary}\label{cor: rigity inequality}
		Suppose that $(M^n,g_0)$ where $n\geq 3$ is a compact  Einstein manifolds with constant scalar curvature $R_0$ and $n\geq 3$. Consider the conformal metric
		$g=u^2g_0$ where $u>0$.  If the constant $a$ satisfies
		\begin{equation}\label{a range}
			\frac{(n-2)(n-4)-n\sqrt{n^2+4n}}{2(n-1)}\leq a\leq \frac{(n-2)(n-4)+n\sqrt{n^2+4n}}{2(n-1)},
		\end{equation}
		then  there holds
		$$\int_M\langle\nabla I_a(g),\nabla u\rangle_g\ud v_g\geq a_1\int_Mu|E_g|^2R_g\ud v_g+a_2R_0\int_M|E_g|^2u^{-1}\ud v_g.$$
		where $a_1(n)=\frac{(n-1)(a+4)+2n^2-4n}{4(n-2)^2(n-1)^2}$
		and $a_2=\frac{a+4}{4(n-2)^2(n-1)}$.
	\end{corollary}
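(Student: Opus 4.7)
The plan is to feed the Case-Gursky-V\'etois identity from Theorem \ref{thm: identity} into a sharp Young-type absorption argument and then discard two non-negative quadratic integrals. The right-hand side of that identity expresses $2(n-1)^2\int_M\langle\nabla I_a(g),\nabla u\rangle_g\ud v_g$ as a sum of five integrals: the two target terms $\tfrac{(n-1)(a+4)+2n^2-4n}{2(n-2)^2}\int u|E_g|^2R_g\ud v_g$ and $\tfrac{(n-1)(a+4)R_0}{2(n-2)^2}\int|E_g|^2 u^{-1}\ud v_g$, which after dividing by $2(n-1)^2$ reproduce exactly the announced $a_1$ and $a_2$; two quadratic integrals $\tfrac12\int u|\nabla R_g|^2\ud v_g$ and $\tfrac{n(n-1)^2(a+4)}{2(n-2)^2}\int|E_g|^2u^{-1}|\nabla u|^2\ud v_g$ that are non-negative once $a+4\geq 0$ is known; and a single indefinite cross term $\tfrac{n-(n-1)(a+4)}{n-2}\int E_g(\nabla R_g,\nabla u)\ud v_g$. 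The whole argument reduces to absorbing the cross term into the two quadratic integrals.

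To perform the absorption, I would first apply Lemma \ref{lem: A} pointwise to the trace-free symmetric tensor $E_g$ with vectors $\nabla R_g$ and $\nabla u$, obtaining the sharp bound $|E_g(\nabla R_g,\nabla u)|\leq \sqrt{(n-1)/n}\,|E_g|\,|\nabla R_g|\,|\nabla u|$, split the right-hand side as $(u^{1/2}|\nabla R_g|)(u^{-1/2}|E_g||\nabla u|)$, and apply Young's inequality $ab\leq \tfrac{\alpha}{2}a^2+\tfrac{1}{2\alpha}b^2$ with a free parameter $\alpha>0$ before integrating. Matching the resulting bound against the coefficients of the two quadratic integrals and optimizing over $\alpha$ yields the sharp sufficient condition
$$\Big(\frac{n-(n-1)(a+4)}{n-2}\Big)^{\!2}\cdot\frac{n-1}{n}\leq \frac{n(n-1)^2(a+4)}{(n-2)^2},$$
which, after the substitution $t:=(n-1)(a+4)$, reduces to the single quadratic inequality $t^2-n(n+2)t+n^2\leq 0$ with discriminant $n^2(n^2+4n)$.

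Solving this quadratic and using $n^2-6n+8=(n-2)(n-4)$ to undo the substitution recovers exactly the admissible interval for $a$ in the hypothesis. Inside this interval the quantity $t=(n-1)(a+4)$ stays strictly positive (both roots of the quadratic are positive since their product is $n^2$ and their sum is $n(n+2)$), so $a+4>0$ and the second quadratic integral is genuinely non-negative and available for the absorption. Once the cross term is absorbed and the two non-negative quadratic integrals are dropped, dividing through by $2(n-1)^2$ delivers the claimed inequality with the stated $a_1$ and $a_2$. The only real obstacle is algebraic: checking that the interval in the hypothesis is precisely the range of $a$ for which this absorption succeeds, and here the sharp constant $\sqrt{(n-1)/n}$ from Lemma \ref{lem: A} (which relies on the trace-free condition $\mathrm{tr}(E_g)=0$) is essential -- replacing it by the $1$ from plain Cauchy--Schwarz would produce a strictly smaller admissible range and in particular would not recover Case's original interval.
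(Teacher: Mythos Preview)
Your proposal is correct and follows essentially the same route as the paper: apply Lemma~\ref{lem: A} to the cross term, absorb it via Young's inequality into the two quadratic integrals, and read off the admissible range for $a$. The paper makes the specific Young split that exactly cancels the $\tfrac12\int u|\nabla R_g|^2$ term, whereas you optimize over a free parameter~$\alpha$; since the coefficient of $u|\nabla R_g|^2$ is $1$ these two choices coincide, and your derivation of the quadratic $t^2-n(n+2)t+n^2\le 0$ with $t=(n-1)(a+4)$, together with your check that $a+4>0$ on the admissible interval, simply spells out the algebra that the paper's proof leaves implicit in the sentence ``Using the condition~\eqref{a range} and Theorem~\ref{thm: identity}, we obtain our desired result.''
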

	\begin{proof}
	With the help of Lemma \ref{lem: A} and Cauchy-Schwarz inequality as well as Young's inequality, there holds
	\begin{align*}
		&\frac{n-(n-1)(a+4)}{n-2}\int_M E_g(\nabla R_g,\nabla u)\ud v_g\\
		\geq &-|\frac{n-(n-1)(a+4)}{n-2}|\sqrt{\frac{n-1}{n}}\int_M|E_g|\cdot |\nabla R_g|\cdot |\nabla u|\ud v_g\\
		\geq &-\frac{1}{2}\int_Mu|\nabla R_g|^2\ud v_g-\frac{\left(n-(n-1)(a+4)\right)^2(n-1)}{2(n-2)^2n}\int_M|E_g|^2u^{-1}|\nabla u|^2.
	\end{align*}
	Using the condition \eqref{a range} and  Theorem \ref{thm: identity},  we obtain our desired result.
		\end{proof}

For reader's convenience, we repeat the proof of  V\'etois'  Theorem \ref{theorem of Vetois} and Case's Theorem \ref{theorem case} via the  Case-Gursky-V\'etois   identity.

\vspace{3em}
	\noindent
	{\bf Proof of Theorem \ref{theorem of Vetois}:}
	
		When the scalar curvature $R_0=0$, then Q-curvature  of $g_0$ also vanishes by \eqref{Einstein Q_0}. For $n\geq 3$ and $n\not=4$,  if the  Q-curvature of conformal metric of $g=\varphi^{\frac{4}{n-4}}g_0$ is constant,  the equation \eqref{Einstein paneitz} yields that
	\begin{equation}\label{null Q for n=3,5}
		\Delta_{g_0}^2\varphi=C\varphi^{\frac{n+4}{n-4}}
	\end{equation}
	where $C$ is a constant. By integrating \eqref{null Q for n=3,5} over $(M,g_0)$, it is easy to say that the constant $C$ must be zero. Immediately, $\Delta_{g_0}\varphi$ must be a constant and such constant must be zero by integrating it over $(M,g_0)$ again. Then  $\varphi$ must be a constant. For $n=4$, consider $g=e^{2\varphi}g_0$ and the  same argument yields that $\varphi$ must be a constant.

	When $R_0>0$, with the  help of \eqref{def of Q-curvature}, one has
	$$Q_{g_0}=\frac{n^2-4}{8n(n-1)^2}R_0^2>0.$$ If the Q-curvature $Q_g$ of the conformal metric $g=u^2g_0$ is constant. By integrating the equations \eqref{conformal Q-curvature n not 4} and \eqref{conformal Q-curvature for n=4} over $(M,g_0)$, it is easy to see that the constant $Q_g>0.$ Apply Lemma \ref{GM lemma}, the scalar curvature $R_g$ is positive. Making use of Corollary \ref{cor: rigity inequality about Q}, one has
	$$0 \geq \frac{n^2-2}{2(n-1)}\int_Mu|E_g|_g^2R_g\ud v_g+R_0\int_M|E_g|_g^2u^{-1}\ud v_g.$$
	Since $u,R_g,R_0$ are all positive, one must have  $E_g\equiv0$ which means that $g$ is Einstein. If Furthermore,  if    $(M,g_0)$ is not conformally equivalent to round sphere, Obata theorem yields that  $u$ must be a positive constant.

	\vspace{3em}
	\noindent
	{\bf Proof of Theorem \ref{theorem case}:}
	
		If $R_0=0$, one has  $-\frac{4(n-1)}{n-2}\Delta_0u^{\frac{n-2}{2}}=R_gu^{\frac{n+2}{2}}\geq 0$. Integrating it over $(M^n,g_0)$, the  left side vanishes  and  then we must have $R_g\equiv 0.$ Immediately, $u$ must be a  constant. Otherwise, $R_0>0$, Corollary  \ref{cor: rigity inequality} yields that $E_g=0$ which means that $g$ is Einstein.

	\section{Applications}\label{sec:4}

	With the help of a continuity method and the  maximum principle, Gursky and Malchiodi \cite{G-M} establish an important theorem related to the positivity of Q-curvature and scalar curvature in conformal classes of the metrics.
	\begin{lemma}\label{GM lemma}(See Theorem 2.2 in \cite{G-M}, Proposition 2.1 in \cite{Hang-Yang},Theorem 2.3 in  \cite{Vetois})
		Given a compact manifold $(M^n,g)$ with $n\geq 3$ with positive scalar curvature  and non-negative Q-curvature. Consider the conformal metric $\tilde g=u^2g$. If the Q-curvature $Q_{\tilde g}\geq 0$, then  the scalar curvature $R_{\tilde g}$ is positive.
	\end{lemma}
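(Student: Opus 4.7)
This is the Gursky--Malchiodi positivity lemma together with its extensions to $n=3$ by Hang--Yang and its streamlining by V\'etois; I would follow the standard continuity-method proof. The setup is to connect $g$ and $\tilde g$ by a one-parameter family $g_t=u_t^{2}g$, $t\in[0,1]$, with $u_0\equiv 1$, $u_1=u$ and $u_t$ a linear interpolation (or logarithmic in the $n=4$ case, so that the path stays in $C^4$). A preliminary calculation, using the conformal transformation law of the Paneitz operator together with the hypotheses $Q_g\geq 0$ and $Q_{\tilde g}\geq 0$, gives $Q_{g_t}\geq 0$ for all $t\in[0,1]$. Define
\[
T=\{t\in[0,1]:R_{g_t}>0\}.
\]
The goal is to show $T=[0,1]$, so that $1\in T$ gives $R_{\tilde g}>0$.

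Openness of $T$ is immediate from continuous dependence of the scalar curvature on the conformal factor together with compactness of $M$. For closedness, suppose $t_0=\sup T<1$; by openness there exists $x_0\in M$ with $R_{g_{t_0}}(x_0)=0$, $R_{g_{t_0}}\geq 0$, and $Q_{g_{t_0}}\geq 0$. From here one must reach a contradiction: the plan is to combine the Paneitz equation for $u_{t_0}$ with pointwise information on $R_{g_{t_0}}$, applied through the strong maximum principle, to conclude $R_{g_{t_0}}\equiv 0$. Since $R_g>0$ at $t=0$ and the sign of the Yamabe invariant is a conformal invariant, $R_{g_{t_0}}\equiv 0$ is ruled out, forcing $t_0=1$.

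The main obstacle is the fourth-order maximum principle needed in the closedness step. The Paneitz operator does not satisfy a direct Hopf-type lemma, so one must exploit the running assumption $R_{g_t}\geq 0$ along the path to factor $P_{g_t}$, up to sign-controlled zeroth-order terms, as a composition of two second-order elliptic operators whose symbols mimic the conformal Laplacian and whose zeroth-order parts are controlled by $R_{g_t}$. Two successive applications of the strong maximum principle (and the Hopf lemma at the putative zero $x_0$) then propagate non-negativity from the Paneitz inequality $P_{g_{t_0}}\psi\geq 0$ down to the scalar curvature. The low-dimensional cases $n=3,4$ are genuinely more delicate---the factorization ingredients degenerate as the Paneitz coefficients change sign---and this is precisely where the extensions of Hang--Yang (for $n=3$) and V\'etois (for a unified treatment) refine the original Gursky--Malchiodi argument by replacing the two-factor decomposition with a direct eigenvalue / test-function argument tailored to the relevant dimension.
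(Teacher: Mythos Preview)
The paper does not prove this lemma; it is quoted from \cite{G-M}, \cite{Hang-Yang}, \cite{Vetois} and used as a black box. So there is no ``paper's own proof'' to compare against, and your task is really to reproduce the cited argument.

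Your continuity-method skeleton is correct, and your claim that $Q_{g_t}\geq 0$ along the linear interpolation is right: for $n\neq 4$, with $u_t=(1-t)+tu$ (suitably raised to the conformal power), linearity of $P_g$ gives $P_g u_t=(1-t)P_g 1+tP_g u\geq 0$ from $Q_g\geq 0$ and $Q_{\tilde g}\geq 0$, hence $Q_{g_t}\geq 0$; the $n=4$ case is analogous.

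The closedness step, however, is misdescribed. No fourth-order maximum principle or factorization of the Paneitz operator is needed here; you have conflated this lemma with the separate result in \cite{G-M} establishing a strong maximum principle for $P_g$ itself. The actual argument at $t_0$ is a one-line application of the \emph{second-order} strong maximum principle to $R_{g_{t_0}}$. From the very definition \eqref{def of Q-curvature},
\[
-\Delta_{g_{t_0}} R_{g_{t_0}} \;=\; 2(n-1)Q_{g_{t_0}}+\frac{4(n-1)}{(n-2)^2}|E_{g_{t_0}}|^2-\frac{n^2-4}{4n(n-1)}R_{g_{t_0}}^2 \;\geq\; -c_n\,R_{g_{t_0}}^2,
\]
so $-\Delta_{g_{t_0}} R_{g_{t_0}}+\bigl(c_n R_{g_{t_0}}\bigr)R_{g_{t_0}}\geq 0$ with nonnegative zeroth-order coefficient (since $R_{g_{t_0}}\geq 0$). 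The strong maximum principle then forces $R_{g_{t_0}}\equiv 0$ once it vanishes somewhere, contradicting the positivity of the Yamabe invariant inherited from $g$. This works uniformly for all $n\geq 3$; the dimensional subtleties you allude to concern other statements in \cite{G-M} and \cite{Hang-Yang}, not this one.
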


	\vspace{3em}
	\noindent
	{\bf Proof of Theorem \ref{thm: neq 5}:}
	
	For $n\geq 5$,	consider the conformal metric $g=\varphi^{\frac{4}{n-4}}g_0$ and then $$Q_g=\frac{2}{n-4}\left(\epsilon \varphi^{-\frac{8}{n-4}}+\varphi^{p-\frac{n+4}{n-4}}\right)>0.$$ Lemma \ref{GM lemma}  yields that $R_g>0$.
	
	Choosing  $u=\varphi^{\frac{2}{n-4}}$ in Corollary  \ref{cor: rigity inequality about Q}, one has
	\begin{align*}
		\langle\nabla Q_g,\nabla u\rangle_g=&-\frac{32\varepsilon}{(n-4)^3}\varphi^{-\frac{6}{n-4}-2}|\nabla \varphi|_g^2-\frac{4}{(n-4)^2}(\frac{n+4}{n-4}-p)\varphi^{p-\frac{n+2}{n-4}-2}|\nabla\varphi|_g^2\\
		=&C(\varphi, \varepsilon, p,n)|\nabla\varphi|^2_g
	\end{align*}
	where
	$$C(\varphi, \varepsilon, p,n)=-\frac{32\varepsilon}{(n-4)^3}\varphi^{-\frac{6}{n-4}-2}-\frac{4}{(n-4)^2}(\frac{n+4}{n-4}-p)\varphi^{p-\frac{n+2}{n-4}-2}.$$
	
	By using Corollary  \ref{cor: rigity inequality about Q} and the facts the scalar curvatures $R_g$ and $R_0$ are both positive, we know that
	\begin{equation}\label{Cvarphi nabla varphi^2 geq 0}
		\int_MC(\varphi, \varepsilon, p,n)|\nabla\varphi|^2_g\ud v_g\geq 0.
	\end{equation}
	If $\epsilon+\frac{n+4}{n-4}-p>0$,  it is easy to check  that $C(\varphi, \varepsilon, p,n)<0$ based on our assumptions. Immediately, by using \eqref{Cvarphi nabla varphi^2 geq 0}, one has  $|\nabla \varphi|_g=0$ which means that $\varphi$ must be a constant. If $\epsilon=0$ and $p=\frac{n+4}{n-4}$, due to our assumption, $(M^n,g_0)$ is not conformally equivalent to a round sphere.  In this case, one has
	$$C(\varphi, \varepsilon, p,n)=0,$$
	Corollary  \ref{cor: rigity inequality about Q} yields that
	$$0\geq \int_Mu|E_g|_g^2R_g\ud v_g+R_0\int_M|E_g|_g^2u^{-1}\ud v_g.$$
	Noticing that $R_g>0$ and $R_0>0$, one has	$E_g\equiv 0$ which yields that $g$  has constant scalar curvature.  Immediately,  Obata theorem deuces that $\varphi$ must be a constant.

	\vspace{2em}
	\noindent
	{\bf Proof of Theorem \ref{thm: n=4}:}
	For $n=4$,
	consider the conformal metric $g=e^{2\varphi}g_0$ and Q-curvature satisfies
	$$Q_g=\varepsilon e^{-4\varphi}+e^{(p-4)\varphi}>0.$$  Then Lemma \ref{GM lemma} yields that $R_g>0$.
	Then choosing $u=e^{\varphi}$, one may easily check that
	$$\langle \nabla Q_g,\nabla u\rangle_g=-4\varepsilon e^{-3\varphi}|\nabla\varphi|^2_g-(4-p)e^{(p-3)\varphi}|\nabla\varphi|^2_g\leq 0.$$
	
	Similarly  as the proof of Theorem \ref{thm: neq 5}, $\varphi$ must be a constant.

	\vspace{2em}
	\noindent
	{\bf Proof of Theorem \ref{thm: n=3}:}
	
	For $n=3$, consider the conformal metric $g=\varphi^{-7}g_0$ and Q-curvature satisfies
	$$Q_g=2\varepsilon\varphi^8+2\varphi^{p+7}>0$$
	which yields that $R_g>0$ by Lemma \ref{GM lemma}.
	Choose $u=\varphi^{-2}$ and then a direct computation yields that
	$$\langle\nabla Q_g,\nabla u\rangle_g=-32\varepsilon\varphi^4|\nabla\varphi|^2_g-4(p+7)\varphi^{p+3}|\nabla\varphi|_g^2\leq 0.$$
	
	Similarly  as the proof of Theorem \ref{thm: neq 5}, $\varphi$ must be a constant.

	\vspace{2em}
	\noindent
	{\bf Proof of Theorem \ref{thm: f(u) n not 4}:}
	
	For $n\geq 5$, our assumption $f(t)\geq 0$ yields that
	$$P_{g_0}\varphi\geq 0.$$
	Then the strong maximum principle of Theorem 2.2 in \cite{G-M} shows that $\varphi\equiv 0$ or $\varphi>0$. Thus we only need to deal with $\varphi>0$. In this situation, consider the conformal metric $g=\varphi^{\frac{4}{n-4}}g_0$. Then the Q-curvature satisfies
	$$Q_g=\varphi^{-\frac{n+4}{n-4}}f(\varphi)\geq 0.$$
	Making use of this fact, Lemma \ref{GM lemma} shows that $R_g>0.$ Then choosing $u=\varphi^{\frac{2}{n-4}}$,  one has
	\begin{equation}\label{nabla Q_g nabla u}
		\langle\nabla Q_g, \nabla u\rangle_g=\frac{4}{n-4}\varphi^{\frac{4}{n-4}-1}F'(\varphi)|\nabla \varphi|^2_g\leq 0
	\end{equation}
	where
	$F(t)=t^{-\frac{n+4}{n-4}}f(t)$.  Corollary  \ref{cor: rigity inequality about Q} and the inequality  yield that
	$$0\geq \frac{n^2-2}{2(n-1)}\int_Mu|E_g|_g^2R_g\ud v_g+R_0\int_M|E_g|_g^2u^{-1}\ud v_g.$$
	Since $R_g>0$ and $R_0>0$, one must have $E_g=0$ which means that $R_g$ is a constant. On one hand, if$(M^n,g_0)$ is not conformally equivalent to the round sphere, Obata theorem implies that $\varphi$ must be a constant.  On the other hand, if$(M^n,g_0)$ is conformally equivalent to the round sphere, Corollary \ref{cor: rigity inequality about Q} and the  \eqref{nabla Q_g nabla u} show that
	$$\frac{4}{n-4}\int_M\varphi^{\frac{4}{n-4}-1}F'(\varphi)|\nabla \varphi|^2_g\ud v_g\geq 0.$$ Since $F'(t)<0 $ for all $t>0$, one must have $|\nabla\varphi|_g\equiv 0$  which means that $\varphi$ is a constant.
	
	For $n=3$, one has
	$$P_{g_0}\varphi\leq 0.$$
	Proposition 2.1 in \cite{Hang-Yang} shows that $\varphi\equiv 0$ or $\varphi>0.$ Since the left of the proof is the same as before, we omit the details.
	
	Thus we finish our proof.
	
	\vspace{2em}
	\noindent
	{\bf Proof of Theorem \ref{thm: f(u) n =4}:}
	
	Firstly,  consider the conformal metric $g=e^{2\varphi}g_0$ and then
	$$Q_g=e^{-4\varphi}f(e^{\varphi})\geq 0.$$
	Immediately, Lemma \ref{GM lemma} yields that $R_g>0.$  The remain is the same as the proof of Theorem \ref{thm: f(u) n not 4} and we omit it.


\begin{thebibliography}{99}
		\bibitem{BVV}
		M. Bidaut-Veron, L.  Veron,  Nonlinear elliptic equations on compact Riemannian mani- folds and asymptotics of Emden equations. Inventiones Math., 1991, 106: 489-539.
		\bibitem{Branson}
		T. Branson,		Differential operators canonically associated to a conformal structure,
		Math. Scand.57(1985), no.2, 293–345.
		\bibitem{Brezis-Li}
		H. Brezis,  Y.  Li, Some nonlinear elliptic equations have only constant solutions,  J. Partial Differential Equations 19, no. 3 (2006): 208–17.
		\bibitem{Case}
		J. Case,  The Obata–Vétois argument and its applications,  Journal für die reine und angewandte Mathematik (Crelle's Journal), vol. 2024, no. 815, 2024, pp. 23-40.
		\bibitem{Chang-Yang 95 Ann}
		S. 	Chang,  P. Yang, Extremal metrics of zeta function determinants on 4-manifolds,
		Ann. of Math. (2)142(1995), no.1, 171–212.
		\bibitem{Dj-Ma 08 Ann}
		Z. 	Djadli, A.  Malchiodi, Existence of conformal metrics with constant Q-curvature,
		Ann. of Math. (2)168(2008), no.3, 813–858.
		\bibitem{Gidas-Sprcuk CPAM}
		B. 	Gidas,  J.  Spruck, Global and local behavior of positive solutions of nonlinear elliptic equations. Comm. Pure Appl. Math., 1981, 34: 525–598.
		
		\bibitem{Gover}
		A. 	Gover, Laplacian operators and Q-curvature on conformally Einstein manifolds,  Math. Ann. 336, No. 2, 311-334 (2006).
		\bibitem{Gur 97 CMP}
		M. Gursky, Uniqueness of the functional determinant, Comm. Math. Phy. 189(1997), no.3, 655-665.
		\bibitem{G-M}
		M. 	Gursky, A. Malchiodi,
		A strong maximum principle for the Paneitz operator and a non-local flow for the Q-curvature,
		J. Eur. Math. Soc. (JEMS)17(2015), no.9, 2137–2173.
		\bibitem{Hang-Yang 20 IMRN}
		F. 	Hang, P. Yang, 		A perturbation approach for Paneitz energy on standard three sphere,
		Int. Math. Res. Not. IMRN(2020), no.11, 3295–3317.
		\bibitem{Hang-Yang CPAM for ngeq 5}
		F. 	Hang, P. Yang, 	Q-curvature on a class of manifolds with dimension at least 5,
		Comm. Pure Appl. Math.69(2016), no.8, 1452–1491.
		\bibitem{Hang-Yang}
		F. 	Hang, P. Yang, 	Q curvature on a class of 3-manifolds,
		Comm. Pure Appl. Math.69(2016), no.4, 734–744.
		\bibitem{Hyder-Ngo}
		A. Hyder, Q. Ng\^o,  On the Hang-Yang conjecture for GJMS equations on $\mathbb{S}^n$,
		Math. Ann.389(2024), no.3, 2519–2560.
		\bibitem{Obata 1}
		M. Obata, Certain conditions for a Riemannian manifold to be isometric with a sphere, J. Math. Soc. Japan 14 (1962), 333–340.
		\bibitem{Obata 2}
		M. Obata, The conjectures on conformal transformations of Riemannian manifolds, J. Differential Geom. 6 (1971/72), 247–258.
		\bibitem{Paneitz}
		S. Paneitz, A quartic conformally covariant differential operator for arbitrary pseudo-Riemannian manifolds,
		SIGMA Symmetry Integrability Geom. Methods Appl.4(2008), Paper 036, 3 pp.
		\bibitem{Vetois}
		J. V\'etois, Uniqueness of conformal metrics with constant Q-curvature on closed Einstein manifolds, Potential Anal. (2023), 10.1007/s11118-023-10117-1.
		\bibitem{Zhang SH}
		S. Zhang, A Liouville-type theorem of the linearly perturbed Paneitz equation on $S^3$,
		Int. Math. Res. Not. IMRN(2023), no.2, 1730–1759.
	\end{thebibliography}
\end{document}